\newtheorem{theorem}{Theorem}
\newtheorem{lemma}[theorem]{Lemma}
\newtheorem{corollary}[theorem]{Corollary}
\theoremstyle{definition}
\newtheorem{definition}[theorem]{Definition}
\newtheorem{problem}[theorem]{Problem}
\newtheorem{question}[theorem]{Question}
\newcommand{\Cons}{\mathop{\rm Cons}}
\newcommand{\Exp}{\mathop{\rm Exp}}
\newcommand{\Pol}{\mathop{\rm Poly}}  
\newcommand{\Subexp}{\mathop{\rm SubExp}}  
\newcommand{\Sub}{\mathop{\rm SubPoly}}  
\newcommand{\cC}{\mathcal C}
\newcommand{\cP}{\mathcal P}
\newcommand{\C}{\mathrm C}
\newcommand{\D}{\mathrm D}
\newcommand{\K}{\mathrm K}
\newcommand{\V}{\mathrm V}
\newcommand{\A}{\mathrm A}
\newcommand{\NN}{\mathbb N}
\newcommand{\RR}{\mathbb R}
\newcommand{\ZZ}{\mathbb Z}
\newcommand{\FF}{{\mathbb F}}
\renewcommand{\wr}{\mathop{\rm wr}}
\newcommand{\Aut}{\mathrm{Aut}}
\newcommand{\Sym}{\mathrm{S}}
\newcommand{\tv}{{\tilde{v}}}
\newcommand{\tu}{{\tilde{u}}}
\newcommand{\one}{{\bf{1}}}
\newcommand{\la}{\langle}
\newcommand{\ra}{\rangle}
\newcommand{\norml}{\trianglelefteq}
\def\imod#1{\allowbreak\mkern10mu({\operator@font mod}\,\,#1)}
\begin{document}

\title[On the order of arc-stabilisers]{On the order of arc-stabilisers in arc-transitive graphs with prescribed local group}

\author[P. Poto\v{c}nik]{Primo\v{z} Poto\v{c}nik}
\address{Primo\v{z} Poto\v{c}nik,\newline
 Faculty of Mathematics and Physics,
 University of Ljubljana, \newline 
Jadranska 19, 1000 Ljubljana, Slovenia}\email{primoz.potocnik@fmf.uni-lj.si}

\author[P. Spiga]{Pablo Spiga}
\address{Pablo Spiga,\newline
 University of Milano-Bicocca, Departimento di Matematica Pura e Applicata, \newline
 Via Cozzi 53, 20126 Milano Italy} \email{pablo.spiga@unimib.it}

\author[G. Verret]{Gabriel Verret}
\address{Gabriel Verret,\newline
Faculty of Mathematics, Nat. Sci. and Info. Tech., University of Primorska, \newline 
Glagolja\v{s}ka 8, 6000 Koper, Slovenia}
\email{gabriel.verret@pint.upr.si}


\subjclass[2000]{Primary 20B25; Secondary 05E18}
\keywords{arc-transitive graphs, arc-stabiliser, graph-restrictive group, local group.}

\begin{abstract}
Let $\Gamma$ be a connected $G$-arc-transitive graph, let $uv$ be an arc of $\Gamma$ and let $L$ be the permutation group induced by the action of the vertex-stabiliser $G_v$ on the neighbourhood $\Gamma(v)$. We study the problem of bounding $|G_{uv}|$ in terms of $L$ and the order of $\Gamma$.
\end{abstract}

\maketitle

\section{Introduction}
All graphs considered in this paper are finite, simple and connected. A graph $\Gamma$ is said to be $G$-\emph{vertex-transitive} if $G$ is a subgroup of $\Aut(\Gamma)$ acting transitively on the vertex-set $\V(\Gamma)$ of $\Gamma$. Similarly, $\Gamma$ is said to be $G$-\emph{arc-transitive} if $G$ acts transitively on the arc-set $\A(\Gamma)$ of $\Gamma$ (an \emph{arc} is an ordered pair of adjacent vertices). 

For a vertex $v$ of $\Gamma$ and for $G\le \Aut(\Gamma)$, let $\Gamma(v)$ be the neighbourhood of $v$ in $\Gamma$ and let $G_v^{\Gamma(v)}$ be the permutation group induced by the action of the stabiliser $G_v$ on $\Gamma(v)$. We shall often refer to the group $G_v^{\Gamma(v)}$ as the {\em local group} of the pair $(\Gamma,G)$. Of course, if $G$ acts transitively on the arcs of $\Gamma$, then the local group $G_v^{\Gamma(v)}$ is transitive and (up to permutation isomorphism) independent of the choice of $v$. If $\Gamma$ is a $G$-arc-transitive graph and $L$ is a permutation group which is permutation isomorphic to $G_v^{\Gamma(v)}$, then we say that the pair $(\Gamma,G)$ is {\em locally-$L$}.

In~\cite{Verret}, the following notion was introduced: a transitive permutation group $L$ is called \emph{graph-restrictive} if there exists a constant $c(L)$ such that, for every locally-$L$ pair $(\Gamma,G)$ and for every arc $(u,v)$ of $\Gamma$, the inequality $|G_{uv}|\leq c(L)$ holds. Proving that certain permutation groups are graph-restrictive is a classical topic in algebraic graph theory; for example, the famous result of Tutte on cubic arc-transitive graphs \cite{Tutte,Tutte2} states that the symmetric group of degree $3$ is graph-restrictive and the still open conjectures of Weiss \cite{Weiss} and Praeger \cite{PConj} claim that every primitive as well as every quasiprimitive permutation group is graph-restrictive. The problem of determining which transitive permutation groups are graph-restrictive was proposed in~\cite{Verret}. A survey of the state of this problem can be found in~\cite{PSVRestrictive}.

There are several reasons why one might like to control the order of the arc-stabiliser $G_{uv}$ in a locally-$L$ pair $(\Gamma,G)$ even when the local group $L$ is not graph-restrictive. While $|G_{uv}|$ can be arbitrarily large in this case,  it would often suffice to obtain a good bound on $|G_{uv}|$ in terms of $|\V(\Gamma)|$; for example, if $|G_{uv}|$ can be bounded by a reasonably tame function of $|\V(\Gamma)|$, then the method described in \cite{ConDob} can be applied to obtain a complete list of all locally-$L$ pairs on a small number of vertices.

Bounding $|G_{uv}|$ in terms of $|\V(\Gamma)|$ and the local group $G_v^{\Gamma(v)}$ is precisely the goal that we pursue in this paper. In fact, it is not hard to see that there is always an exponential upper bound  on $|\Aut(\Gamma)_{uv}|$ in terms of $|\V(\Gamma)|$ (see Theorem~\ref{the:exp}). It is thus very natural to ask for which local groups a subexponential upper bound exists. This question motivates the following definition.

\begin{definition}
\label{def:f-restrictive} 
Let $f\colon  \NN \to \RR$ be a function and let $L$ be a transitive permutation group. If, for every locally-$L$ pair $(\Gamma,G)$  and every arc $(u,v)$ of $\Gamma$, the inequality $|G_{uv}|\leq f(|V(\Gamma)|)$ holds, then $L$ is called \emph{$f$-graph-restrictive}. On the other hand, if, for every integer $n$, there exists a locally-$L$ pair  $(\Gamma,G)$ with $|\V(\Gamma)| \ge n$ and $|G_{uv}| \ge f(|\V(\Gamma)|)$, then $L$ is called \emph{$f$-graph-unrestrictive}.
\end{definition}

Note that, for each transitive permutation group $L$, there exists a function $f$ such that $L$ is both $f$-graph-restrictive and $f$-graph-unrestrictive (for example, let $f(n)$ be the largest possible order of $G_{uv}$ in a locally-$L$ pair $(\Gamma,G)$ with $\Gamma$ having order $n$).  On the other hand, finding such a function explicitly is quite difficult in general. We therefore define graph-restrictiveness and graph-unrestrictiveness over a class of functions.

\begin{definition}
Let $\cC$ be a class of functions. If $L$ is $f$-graph-restrictive (respectively, $f$-graph-unrestrictive) for some function $f \in \cC$, then we say that $L$ is {\em $\cC$-graph-restrictive} (respectively, {\em $\cC$-graph-unrestrictive}). If $L$ is both $\cC$-graph-restrictive and $\cC$-graph-unrestrictive, then we say that $L$ has \emph{graph-type $\cC$}.
\end{definition} 

Given a transitive group $L$, we would like to find a ``natural" class of functions $\cC$ such that $L$ has graph-type $\cC$. The classes of functions that will particularly interest us are: the class $\Cons$ of constant functions, the class $\Pol$ of functions of the form $f(n) = n^\alpha$ for some $\alpha > 0$ and the class $\Exp$ of functions of the form $f(n)=\alpha^n$ for some $\alpha>1$. 

We also define the intermediate classes : the class $\Sub$ of functions of the form $f(n)$ such that $f(n)$ is unbounded and $\frac{\log(f(n))}{\log(n)}\to 0$ as $n\to \infty$ and the class $\Subexp$ of functions of the form $f(n)$ such that $\frac{\log(f(n))}{\log(n)}$ is unbounded and $\frac{\log(f(n))}{n}\to 0$ as $n\to \infty$. 
Every transitive permutation group is $\Exp$-graph-restrictive (see Theorem~\ref{the:exp}). It is then an elementary exercise in analysis to show that a transitive permutation group has graph-type exactly one of $\Cons$, $\Sub$, $\Pol$, $\Subexp$ or $\Exp$.

\begin{problem}\label{mainprob}
Given a transitive permutation group $L$, find $\cC$ in $\{\Cons,\Sub,\Pol,$ $\Subexp,\Exp\}$ such that $L$ has graph-type $\cC$.
\end{problem}

We now give a brief summary of the results which are proved in the rest of the paper. In Section~\ref{Expupper}, we show that every transitive permutation group is $\Exp$-graph-restrictive. In Section~\ref{sec:wreath}, we show that the imprimitive wreath product of two non-trivial transitive permutation groups is $\Exp$-graph-unrestrictive and hence has graph-type $\Exp$.

In Section~\ref{sec:TwoBlocks}, we consider a permutation group $L$ that is transitive and admits a system of imprimitivity consisting of two blocks $A$ and $B$ and show that $L$ is $\Pol$-graph-unrestrictive unless $L$ is regular. Moreover, we show that if the pointwise stabiliser of $A$ in $L$ is non-trivial, then $L$ is actually $\Exp$-graph-unrestrictive and thus has graph-type $\Exp$.

In Section~\ref{deg6}, we consider the imprimitive permutation groups of degree $6$ that do not admit a system of imprimitivity consisting of two blocks of size $3$. There exist five such groups up to permutation isomorphism. Two of them are imprimitive wreath products and hence have graph-type $\Exp$. We show that the remaining three groups are $\Subexp$-graph-unrestrictive. However, for none of these three groups we were able to decide whether it has graph-type $\Subexp$ or $\Exp$.

Finally, in Section~\ref{sec:smalldegree}, we apply these results to solve Problem~\ref{mainprob} for permutation groups of degree at most $7$, except for the three undecided cases mentioned in the previous paragraph.  It turns out that there is a unique transitive permutation group of degree at most $7$ with graph-type $\Pol$ (namely, the dihedral group of order $12$ in its natural action on six points) while there are many with graph-types $\Cons$ and $\Exp$. On the other hand, we know of no transitive permutation group with graph-type $\Sub$ or $\Subexp$.

\begin{question}
Does there exist a transitive permutation group with graph-type $\Sub$ or $\Subexp$?
\end{question}

\section{General exponential upper bound}
\label{Expupper}

In this section, we show that every transitive permutation group is $\Exp$-graph-restrictive.

\begin{theorem}
\label{the:exp}
Let $L$ be a transitive permutation group and let $L_\omega$ be a point-stabiliser in $L$. Then $L$ is $f$-graph-restrictive where $f(n) = |L_\omega|^{\frac{n-2}{2}}$. In particular, $L$ is $\Exp$-graph-restrictive.
\end{theorem}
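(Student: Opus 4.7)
The strategy is to prove the bound via a BFS-style enumeration of the vertex set and a chain of pointwise stabilisers, with each step bounded by $|L_\omega|$.

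First, I would set $v_1 := v$, $v_2 := u$, and extend this to an enumeration $v_1, v_2, v_3, \ldots, v_n$ of $V(\Gamma)$ so that each $v_i$ with $i \ge 3$ has at least one neighbour $v_{\pi(i)}$ in $\{v_1, \ldots, v_{i-1}\}$. Such an enumeration exists because $\Gamma$ is connected (e.g.\ by running breadth-first search from $\{u,v\}$). Writing $G_i := G_{v_1, \ldots, v_i}$, we have $G_2 = G_{uv}$, and $G_n = 1$ since $G \le \Aut(\Gamma)$ acts faithfully on $V(\Gamma)$. This gives the telescoping product
$$|G_{uv}| = \prod_{i=2}^{n-1}|G_i : G_{i+1}|.$$

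The key claim is that each factor $|G_i : G_{i+1}|$ is at most $|L_\omega|$. Indeed, this index is the orbit length of $v_{i+1}$ under $G_i$. Since $G_i$ fixes $v_{\pi(i+1)}$, the orbit lies in $\Gamma(v_{\pi(i+1)})$. Moreover $v_{\pi(i+1)}$ itself has a fixed neighbour in $\{v_1,\ldots,v_i\}$: either the predecessor $v_{\pi(\pi(i+1))}$ (in the case $\pi(i+1) \ge 2$), or $u = v_2$ (in the case $\pi(i+1) = 1$, using that $u \in \Gamma(v)$). Hence the induced action of $G_i$ on $\Gamma(v_{\pi(i+1)})$ fixes a point, and so its image in the local group at $v_{\pi(i+1)}$ lies in a point-stabiliser and has order at most $|L_\omega|$. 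The claim follows, yielding $|G_{uv}| \le |L_\omega|^{n-2}$, which already suffices for the ``in particular'' statement that $L$ is $\Exp$-graph-restrictive.

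The main obstacle is then to sharpen this to the stated bound $|L_\omega|^{(n-2)/2}$, saving a factor of $2$ in the exponent. I expect this comes from a more economical enumeration in which half of the indices $|G_i : G_{i+1}|$ are trivial: namely, whenever a vertex $v_j$ in the fixed set has all but one of its neighbours already fixed, the remaining neighbour is forced to be the unique non-fixed element of $\Gamma(v_j)$ that is permissible, and thus costs no additional factor when it is added. By organising the enumeration so that each ``paid'' addition of cost $|L_\omega|$ is paired with a subsequent ``free'' addition of cost $1$ (for instance by processing a suitable spanning tree in a carefully chosen order and exploiting the non-tree edges), one should be able to reduce the number of non-trivial factors from $n-2$ to $(n-2)/2$. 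The hard part will be to show that such a pairing can always be achieved, which appears to require a combinatorial lemma about connected vertex-transitive graphs.
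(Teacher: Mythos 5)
Your first bound is correct: the BFS chain of pointwise stabilisers, with each index $|G_i:G_{i+1}|$ bounded by $|L_\omega|$ because $G_i$ fixes both $v_{\pi(i+1)}$ and a neighbour of it (so its image in the local group at $v_{\pi(i+1)}$ lies in a point-stabiliser), gives $|G_{uv}|\le |L_\omega|^{n-2}$, and this already yields the ``in particular'' clause. But the stated bound $|L_\omega|^{(n-2)/2}$ is not established, and the repair you sketch --- pairing each ``paid'' addition with a forced ``free'' one --- is not just unproven but is the wrong invariant. Nothing guarantees that half the additions occur at vertices all but one of whose neighbours are already fixed (for valency $\ge 3$ this will typically fail), and even locally the paid/free pairing breaks: if the current stabiliser induces $\Sym_3$ on an orbit of length $3$ inside some neighbourhood, adding its vertices one at a time gives indices $3,2,1$ --- two paid steps and only one free one --- so no pairing of individual steps can work. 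No combinatorial lemma about vertex-transitive graphs of the kind you hope for is needed (or likely true).

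The paper's factor-of-$2$ saving comes from a different, group-theoretic accounting: steps are grouped by \emph{orbits}, not paired. At each stage one takes $S'$ to be the connected component, containing the current set, of the full fixed-point set of the current stabiliser $G_k$ (this enlargement is what guarantees that some vertex $x$ adjacent to a vertex $w\in S'$ is \emph{not} fixed by $G_k$, and that $w$ has a fixed neighbour $z\in S'$), and then adds the entire orbit $X=x^{G_k}\subseteq \Gamma(w)$ in one step. The quotient $G_k/G_{k+1}$ is isomorphic to $G_k^X$, which is a section of $G_{wz}^{\Gamma(w)}\cong L_\omega$; so fixing \emph{all} of $X$ costs a total factor of at most $|L_\omega|$, while $|X|\ge 2$ since $x$ is moved. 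Hence there are at most $(n-2)/2$ costly steps and $|G_{uv}|\le |L_\omega|^{(n-2)/2}$. Translated into your framework: the product of the indices $|G_i:G_{i+1}|$ over the steps that exhaust a single $G_i$-orbit equals the order of the group induced on that orbit, hence is at most $|L_\omega|$ for at least two new vertices --- in the $\Sym_3$ example above, $3\cdot 2\cdot 1=6\le |L_\omega|$ for three vertices. The missing idea is this per-orbit bookkeeping (together with passing to the fixed-point component so that such an orbit with the required anchors $w,z$ always exists), not a pairing of paid and free vertices.
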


\begin{proof}
Let $(\Gamma,G)$ be a locally-$L$ pair, let $n=|\V(\Gamma)|$ and let $(u,v)$ be an arc of $\Gamma$. Recall that a group $A$ is called a  {\em section} of a group $B$ provided that $A$ is isomorphic to a quotient of some subgroup of $B$.

We shall now recursively construct an increasing sequence of subsets $S_i$ of  $\V(\Gamma)$ and a decreasing subnormal sequence of subgroups $G_i$ of $G$,

\begin{equation}
\label{eq:SG}
 S_0 \subseteq S_1 \subseteq \cdots \subseteq S_m \quad \hbox{ and } \quad  G_0 \trianglerighteq G_1 \trianglerighteq \cdots \trianglerighteq G_m,
\end{equation}
such that the conditions (i)--(v) are fulfilled. (We use the notation $\Gamma[S]$ to denote the subgraph of a graph $\Gamma$ induced by a set of vertices $S \subseteq \V(\Gamma)$.)
\begin{itemize}
\item[{\rm (i)}]
$S_0 = \{u, v\}$ and $|S_i| \ge |S_{i-1}| + 2$ for every $i\in \{1,\ldots,m\}$,
\item[{\rm (ii)}]
$\Gamma[S_i]$ is connected for every $i\in \{0,\ldots,m\}$,
\item[{\rm (iii)}]
$G_i = G_{(S_i)}$ for every $i\in \{0,\ldots,m\}$,
\item[{\rm (iv)}]
$G_{i-1}/G_i$ is a section of $L_\omega$ for every $i\in \{1,\ldots,m\}$,
\item[{\rm (v)}]
$G_m = 1$.
\end{itemize}

Let $S_0 = \{u,v\}$ and let $G_0 =G_{(S_0)} = G_{uv}$. Clearly conditions (i)--(iv) are satisfied with $m=1$. Suppose now that for some $k\ge 0$ the sets $S_0, \ldots, S_k$ and the groups $G_0, \ldots, G_k$ are defined and that they satisfy the conditions (i)--(iv) with $m=k$.

Let $S$ be the set of all vertices of $\Gamma$ that are fixed by $G_k$ and let $S'$ be the vertex-set of the connected component of $\Gamma[S]$ that contains $S_k$. Then clearly $S_k\subseteq S'\subseteq S$ hence $G_k\leq G_{(S)}\leq G_{(S')}\leq G_{(S_k)}=G_k$ and $G_{(S')} = G_k$.

If $S' = \V(\Gamma)$, then we let $m=k$ and terminate the construction. Observe that in this case the group $G_m$ is trivial, as required by condition (v).

If $S'$ is a proper subset of $\V(\Gamma)$, then it follows from the definition of $S'$ that there exists a vertex in $\V(\Gamma)\setminus S'$, say $x$, which is adjacent to some vertex, say $w$, in  $S'$ and which is not fixed by $G_k$. Moreover, since $\Gamma[S']$ is  connected and contains at least two vertices, there exists a neighbour of $w$, say $z$, which is contained in $S'$. 

Let $X=x^{G_k}$, let $S_{k+1} = S' \cup X$ and let $G_{k+1} = G_{(S_{k+1})}$. We need to show that the extended sequences $(S_i)_i$ and $(G_i)_i$ still satisfy the conditions (i)--(iv) with $m=k+1$. Indeed, since $x$ is not fixed by $G_k$, we see that $|X|\ge 2$ and thus condition (i) holds. Similarly, conditions (ii) and (iii) hold by construction of $S_{k+1}$ and $G_{k+1}$. To show (iv), observe that $G_{k+1}$ is the kernel of the action of $G_k$ on $X$. Hence  $G_{k}/G_{k+1}$ is isomorphic to the permutation group $G_k^X$ induced by the action of $G_k$ on $X$. However, $G_k^X$ can also be viewed as the permutation group induced by the action of $G_k^{\Gamma(w)}$ on $X$, and is thus isomorphic to a quotient of $G_k^{\Gamma(w)}$. Since $G_k \le G_{wz}$, it follows that $G_k^X$ 
is a section of $G_{wz}^{\Gamma(w)}$. Since the latter group is isomorphic to $L_\omega$, this shows that condition (iv) holds as well.

The result of the above construction is thus a pair of sequences (\ref{eq:SG}) satisfying conditions (i)--(v). Now observe that condition (i) implies that $m\le \frac{n-2}{2}$. On the other hand, condition (iv) implies that $|G_i/G_{i+1}| \le |L_\omega|$ for every $i\in \{1,\ldots,m\}$. In view of condition (v), this implies that $|G_0| \le |L_\omega|^m$. Since $G_0 = G_{uv}$, this completes the proof.
\end{proof}

\noindent\textbf{Remark.}
The upper bound provided by the function $f$ in Theorem~\ref{the:exp} is rather crude and can be improved if some further information about the permutation group $L$ is taken into consideration. For example, if $p$ is the smallest prime dividing $|L_\omega|$, then the orbit $X$ introduced in the proof of Theorem~\ref{the:exp} is of length at least $p$ and thus condition (i) can be replaced by $|S_{i}| \ge |S_{i+1}| + p$. The definition of $f$ in Theorem~\ref{the:exp} can then be replaced by $f(n)=|L_\omega|^\frac{n-2}{p}$.

\section{Imprimitive wreath products}
\label{sec:wreath}
In this section, we show that the imprimitive wreath product of two non-trivial transitive permutation groups is $\Exp$-graph-unrestrictive. We first need the following lemma.

\begin{lemma}
\label{homologicalLemma}
Let $T$ be a transitive permutation group and let $T_\omega$ be a point-stabiliser in $T$. Then there exists a sequence of locally-$T$ pairs $(\Gamma_i,H_i)$, with
$|V(\Gamma_i)| \to \infty$ as $i\to \infty$,
such that, for every $i\geq 1$, the stabiliser of an arc of $\Gamma_i$ in $H_i$ has order $|T_\omega|^2$. 
\end{lemma}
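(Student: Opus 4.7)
My plan is to produce the required sequence as regular abelian covers of a single small locally-$T$ base pair, using the fact that quotienting $H_1(\Gamma_0;\ZZ)$ by a characteristic subgroup yields a regular cover on which the acting group lifts as a group extension preserving arc-transitivity, the local action, and the arc-stabiliser order. For the base, I take $\Gamma_0 = K_{n,n}$ where $n = |\Omega| \ge 2$ (the lemma is vacuous when $T$ is trivial), with bipartition $\Omega \times \{0\}$ and $\Omega \times \{1\}$, and set $H_0 = T \wr \Sym_2$, acting imprimitively: the base group $T \times T$ acts componentwise on the two parts while the top $\Sym_2$ swaps them. Then $H_0 \le \Aut(\Gamma_0)$ is arc-transitive with local group $T$ at $(\omega,0)$ (induced by the second factor of the base group), so $(\Gamma_0,H_0)$ is locally-$T$, and the stabiliser in $H_0$ of the arc $((\omega,0),(\omega',1))$ is $T_\omega \times T_{\omega'}$, of order exactly $|T_\omega|^2$.

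\textbf{Construction of the sequence.} For each $m \ge 1$, let $\Gamma_m \to \Gamma_0$ be the connected regular abelian cover with covering group $N_m = (\ZZ/m\ZZ)^{\beta}$, where $\beta = (n-1)^2$ is the first Betti number of $K_{n,n}$, corresponding to the natural surjection $H_1(\Gamma_0;\ZZ) \to H_1(\Gamma_0;\ZZ/m\ZZ)$. Since $m H_1(\Gamma_0;\ZZ)$ is characteristic in $H_1(\Gamma_0;\ZZ)$, it is in particular $H_0$-invariant; the standard lifting theorem for regular abelian covers then produces a subgroup $H_m \le \Aut(\Gamma_m)$ containing $N_m$ as a normal subgroup with $H_m/N_m \cong H_0$, and $|\V(\Gamma_m)| = 2n\, m^{\beta} \to \infty$ as $m \to \infty$.

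\textbf{Verification.} A regular covering is a local bijection on neighbourhoods, so the local action of $H_m$ at any $\tv \in \V(\Gamma_m)$ is permutation-isomorphic to that of $H_0$ at its image, and thus $(\Gamma_m, H_m)$ is locally-$T$. Arc-transitivity of $H_m$ follows by combining arc-transitivity of $H_0$ (to match the projections of any two arcs) with the free $N_m$-action on fibres (to match their lifts), after which an orbit count gives
\begin{equation*}
|H_{m,\tu\tv}| = \frac{|H_m|}{|\A(\Gamma_m)|} = \frac{|N_m|\,|H_0|}{|N_m|\,|\A(\Gamma_0)|} = |H_{0,uv}| = |T_\omega|^2,
\end{equation*}
as required.

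\textbf{Main obstacle.} The technical heart of the argument is the invocation of the lifting theorem: one must verify that the $H_0$-invariance of the characteristic subgroup $m H_1(\Gamma_0;\ZZ)$ indeed guarantees the existence of $H_m$ as an extension of $H_0$ by $N_m$ inside $\Aut(\Gamma_m)$, and that arc-transitivity and the local action both transfer to the cover. Once this standard covering-theoretic input is granted, the arc-stabiliser order drops out of a purely formal count.
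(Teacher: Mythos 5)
Your proposal is correct and takes essentially the same route as the paper: the paper likewise starts from the base pair $(\K_{k,k},\,(T\times T)\rtimes\Sym_2)$, which is locally-$T$ with arc-stabilisers of order $|T_\omega|^2$, and then passes to homological covers along which the group lifts, preserving the local action and the arc-stabiliser order. The only cosmetic difference is that the paper iterates homological $2$-covers recursively (citing the lifting result of Malni\v{c}--Maru\v{s}i\v{c}--Poto\v{c}nik), whereas you take, for each $m$, the $\ZZ/m\ZZ$-homology cover of the fixed base graph; both constructions rest on exactly the same covering-theoretic lifting theorem and the same orbit count.
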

\begin{proof}
There are several ways how to prove the existence of such a sequence. We construct it recursively using the theory of covering projections of graphs. We refer the reader to \cite{MMP} for further information on this topic. In particular, we refer the reader to \cite[Section 6]{MMP} for the definition of a homological $p$-cover of a graph.

Let $k$ be the degree of $T$, let $\Gamma_1 = \K_{k,k}$ be the complete bipartite graph with bipartition sets of size $k$, and let $H_1 = (T\times T) \rtimes\Sym_2$ acting arc-transitively on $\Gamma_1$ in the natural way. Note that $(\Gamma_1,H_1)$ is locally-$T$ and an arc-stabiliser has order $|T_\omega|^2$.

Suppose now that $(\Gamma_i,H_i)$ has already been constructed for some $i\geq 1$. Let $\Gamma_{i+1}$ be a homological $2$-cover of $\Gamma_i$ and let $H_{i+1}$ be the lift of $H_i$ along the covering projection $\Gamma_{i+1} \to \Gamma_i$ (note that by \cite[Proposition 6.4]{MMP} the group $H_i$ indeed lifts along this covering projection). Note that the vertex-stabilisers in $H_{i+i}$ and in $H_i$ are isomorphic and, moreover, that they induce permutation isomorphic groups on the respective neighbourhoods. In particular,
 the pair $(\Gamma_{i+1}, H_{i+1})$ is locally-$T$ and an arc-stabiliser has order $|T_\omega|^2$.
 \end{proof}

\begin{theorem}
\label{thm:wreath}
Let $R$ and $T$ be non-trivial transitive permutation groups, let $m$ be the degree of $R$ and let $T_\omega$ be a point-stabiliser in $T$. Then the imprimitive wreath product $R \wr T$ is $f$-graph-unrestrictive where $f(n) =\frac{|T_\omega|^2|R|^{\frac{n}{m}}}{m^2} $. In particular, $R \wr T$ has graph-type $\Exp$.
\end{theorem}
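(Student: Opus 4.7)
The plan is to take the locally-$T$ pairs produced by Lemma~\ref{homologicalLemma} and inflate each vertex to an independent set of size $m$, thereby producing locally-$(R\wr T)$ pairs with large arc-stabilisers. Concretely, I would apply Lemma~\ref{homologicalLemma} to $T$ to obtain a sequence of locally-$T$ pairs $(\Gamma_i,H_i)$ with $|\V(\Gamma_i)|\to\infty$ and $|(H_i)_{xy}|=|T_\omega|^2$ for every arc $(x,y)$ of $\Gamma_i$; denote the valency of $\Gamma_i$, which equals the degree of $T$, by $k$. I would then form the graph $\Gamma$ with vertex set $\V(\Gamma_i)\times\{1,\ldots,m\}$ in which $(x,a)$ is adjacent to $(y,b)$ exactly when $x$ is adjacent to $y$ in $\Gamma_i$; this inflation has $m|\V(\Gamma_i)|$ vertices and valency $km$. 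As the acting group I would take $G=R\wr H_i$, with base group $R^{\V(\Gamma_i)}$ acting coordinatewise on the fibres $\{x\}\times\{1,\ldots,m\}$ through $R$, and top group $H_i$ permuting the fibres as it permutes $\V(\Gamma_i)$.

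The next step is to verify that $(\Gamma,G)$ is locally-$(R\wr T)$. The neighbours of a vertex $(x,a)$ split, under $G_{(x,a)}$, into $k$ blocks of size $m$ indexed by the neighbours of $x$ in $\Gamma_i$. Within each such block the corresponding base factor of $G$ induces a copy of $R$, while $(H_i)_x$ permutes the blocks via its induced action on $\Gamma_i(x)$, which is $T$ by the choice of $(\Gamma_i,H_i)$. This is precisely the natural imprimitive action of $R\wr T$ on $km$ points, and since $H_i$ is arc-transitive on $\Gamma_i$ and $R$ is transitive on $\{1,\ldots,m\}$, the pair $(\Gamma,G)$ is arc-transitive.

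The arc-stabiliser computation is then routine. For the arc $((x,a),(y,b))$ of $\Gamma$, a pair $(f,h)\in R^{\V(\Gamma_i)}\rtimes H_i$ fixes both endpoints if and only if $h\in(H_i)_{xy}$, $f(x)\in R_a$, $f(y)\in R_b$, with $f(z)$ arbitrary for $z\notin\{x,y\}$. Using $|R_a|=|R_b|=|R|/m$ and $|(H_i)_{xy}|=|T_\omega|^2$, the arc-stabiliser has order $\tfrac{|T_\omega|^2|R|^{|\V(\Gamma_i)|}}{m^2}$; substituting $|\V(\Gamma_i)|=|\V(\Gamma)|/m$ gives exactly $f(|\V(\Gamma)|)$. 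Since $R$ is non-trivial and transitive of degree $m\ge 2$, we have $|R|\ge m\ge 2$, so $|R|^{1/m}>1$ and $f$ dominates a genuine exponential; combined with Theorem~\ref{the:exp}, this yields graph-type $\Exp$.

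The only delicate point I foresee is verifying that the local action is permutation-isomorphic to the natural imprimitive action of $R\wr T$ on $km$ points, rather than merely matching as abstract groups. This relies crucially on $(H_i)_x$ inducing precisely $T$ on $\Gamma_i(x)$ as a permutation group, which is exactly the content of the locally-$T$ condition furnished by Lemma~\ref{homologicalLemma}; everything else is bookkeeping within the wreath product.
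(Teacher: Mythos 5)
Your proposal is correct and follows essentially the same route as the paper: both apply Lemma~\ref{homologicalLemma} to $T$, inflate $\Gamma_i$ via the lexicographic product with the edgeless graph on $m$ vertices, and act with $R\wr H_i$ to obtain locally-$(R\wr T)$ pairs. The only (immaterial) difference is that you compute the arc-stabiliser directly element-wise as $\{(f,h): h\in (H_i)_{xy},\ f(x)\in R_a,\ f(y)\in R_b\}$, whereas the paper gets the same value $\frac{|T_\omega|^2|R|^{n/m}}{m^2}$ by the counting identity $|G_{\tu\tv}|=|G|/|\A(\Lambda_i)|$ using arc-transitivity.
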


\begin{proof}
Let $\Delta$ and $\Omega$ be the sets on which $R$ and $T$ act, respectively. Then $R \wr T$ is a permutation group on $\Delta \times \Omega$. Writing $\Omega = \{1,2, \ldots, k\}$ yields that $R\wr T$ is isomorphic to the semidirect product $R^k\rtimes T$ where the action of
 $(a_1, \ldots, a_k) \in R^k$ and $b\in T$ on $\Delta \times \Omega$ is given by:
$$
 (\delta,\omega)^{(a_1,\ldots,a_k)} = (\delta^{a_\omega},\omega) \quad \hbox{ and }  \quad
 (\delta,\omega)^{b} = (\delta,\omega^b). 
$$

By Lemma~\ref{homologicalLemma}, there exists a sequence of locally-$T$ pairs $(\Gamma_i,H_i)$, with $|V(\Gamma_i)| \to \infty$, such that, for every $i\geq 1$, the stabiliser of an arc of $\Gamma_i$ in $H_i$ has order $|T_\omega|^2$. Let $\Lambda_i$ be the lexicographic product of $\Gamma_i$ with the edgeless graph on the vertex-set $\Delta$,  that is, the graph with vertex-set $\Delta \times \V(\Gamma_i)$ and two vertices $(\delta_1,v_1)$ and  $(\delta_2,v_2)$ adjacent in $\Lambda_i$ whenever $v_1$ and $v_2$ are adjacent in $\Gamma$.
Note that $|\V(\Lambda_i)|=m|\V(\Gamma_i)|$ and that $|\A(\Lambda_i)|=m^2 |\A(\Gamma_i)|$. 

Observe that the imprimitive wreath product $G_i=R\wr H_i$ acts on $\Lambda_i$ as an arc-transitive group of automorphisms and that the local group $(G_i)_v^{\Lambda_i(v)}$ is permutation isomorphic to $R \wr T$.

Let $(u,v)$ be an arc of $\Gamma_i$ and let $(\tu,\tv)$ be an arc of $\Lambda_i$. Since $\Gamma_i$ is $H_i$-arc-transitive and $\Lambda_i$ is $G_i$-arc-transitive, it follows that:
\begin{equation*}
\label{eqn:Gtutv}
 |(G_i)_{\tu\tv}| = \frac{|G_i|}{|\A(\Lambda_i)|}    = \frac{|R|^{|\V(\Gamma_i)|} |H_i|}{m^2 |\A(\Gamma_i)|}= \frac{|(H_i)_{uv}||R|^{\frac{|\V(\Lambda_i)|}{m}}}{m^2}
 =\frac{|T_\omega|^2|R|^{\frac{|\V(\Lambda_i)|}{m}}}{m^2}.
\end{equation*}
Since $|\V(\Lambda_i)| \to  \infty$ as $i\to \infty$, the result follows.
\end{proof}

\section{System of imprimitivity consisting of two blocks}
\label{sec:TwoBlocks}
In this section, we consider a permutation group $L$ that is transitive and admits a system of imprimitivity consisting of two blocks $A$ and $B$ and show that $L$ is $\Pol$-graph-unrestrictive unless it is regular. Moreover, we show that if the pointwise stabiliser of $A$ in $L$ is non-trivial, then $L$ is actually $\Exp$-graph-unrestrictive.

To do this, we must first define the graphs $\C(k,r,s)$, which were first defined by Praeger and Xu~\cite{PraegerXu}.  Let $r$ and $s$ be positive integers with $r\geq 3$ and $1\leq s\leq r-1$. Let $\C(k,r,1)$ be the lexicographic product $\C_r[k\K_1]$ of a cycle of length $r$ and an edgeless graph on $k$ vertices. In other words, $\V(\C(k,r,1))=\ZZ_k\times\ZZ_r$ with $(u,i)$ being adjacent to $(v,j)$ if and only if $i-j \in \{-1,1\}$. A path in $\C(k,r,1)$ is called {\em traversing} if it contains at most one vertex from $\ZZ_k\times\{y\}$ for each $y\in\ZZ_r$. For $s\geq 2$, let $\C(k,r,s)$ be the graph with vertices being the traversing paths in $\C(k,r,1)$ of length $s-1$ and with two such $(s-1)$-paths being adjacent in $\C(k,r,s)$ if and only if their union is a traversing path in $\C(k,r,1)$ of length $s$.

 Clearly, $\C(k,r,s)$ is a connected $2k$-valent graph with $rk^s$ vertices. There is an obvious action of the wreath product $\Sym_k\wr\D_r$ as a group of automorphisms of $\C(k,r,1)$. (We denote the symmetric group and the dihedral group in their natural action on $n$ points by $\Sym_n$ and $\D_n$, respectively.) Moreover, every automorphism of $\C(k,r,1)$ has a natural induced action as an automorphism of $\C(k,r,s)$. We use these graphs to prove the following result.

\begin{theorem}\label{theo:cons}
Let $L$ be a transitive permutation group of degree $2k$ with a system of imprimitivity consisting of two blocks $A$ and $B$. Let $L_\omega$ be a point-stabiliser and let $L_{(A)}$ be the pointwise stabiliser of $A$ in $L$. Let $\ell\ge 1$ and $m\ge 2$ be integers, let $\Gamma = \C(k,2\ell m,m-1)$ and let $uv$ be an arc of $\Gamma$. Then there exists a group $G \le \Aut(\Gamma)$ such that $(\Gamma,G)$ is locally-$L$ and $|G_{uv}| =|L_{(A)}|^{2 m (\ell -1)} |L_\omega|^m$.
\end{theorem}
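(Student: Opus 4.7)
The plan is to construct $G$ as a subgroup of $\Aut(\C(k, 2\ell m, 1))$, which descends to a subgroup of $\Aut(\Gamma)$ via the induced action on traversing paths. Writing $r = 2\ell m$ and identifying $\Aut(\C(k, r, 1)) \le \Sym_k \wr \D_r$ (acting on $\ZZ_k \times \ZZ_r$ in the natural way), I would define $G$ as the subgroup consisting of those elements $(\mathbf{g}, d) \in \Sym_k \wr \D_r$ whose restriction to each pair of cyclically adjacent cells of $C_r$ lies in $L \le \Sym_k \wr \Sym_2$ (after identifying the two cells with the blocks $A$ and $B$ via a suitable orientation of $C_r$). Block-preserving elements of $L$ are then realised by rotations together with appropriate base elements, while block-swap elements of $L$ are realised by reflections of $\D_r$ that exchange two adjacent cells.

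I would then verify that $(\Gamma, G)$ is locally-$L$ and that $G$ is arc-transitive on $\Gamma$. For a vertex $w$ of $\Gamma$ (a traversing path of length $m - 2$ in $\C(k, r, 1)$), the $\Gamma$-neighbourhood splits naturally into two blocks of size $k$ corresponding to the two possible directions of extending $w$, and under the identification with the two cells ``just beyond'' the endpoints of $w$, the construction of $G$ ensures that $G_w$ induces exactly $L$ on this set. Arc-transitivity follows from the transitivity of $\D_r$ on oriented $m$-tuples of consecutive cells of $C_r$, combined with the local $L$-action within each cell.

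The core of the proof is the computation of $|G_{uv}|$. Let $P = (p_0, p_1, \ldots, p_{m-1})$ be the traversing path in $\C(k, r, 1)$ underlying the arc $(u, v)$, with $p_i$ in cell $i$. Any $g \in G_{uv}$ fixes the ordered path $P$ and hence each $p_i$ individually. This forces the $\D_r$-component of $g$ to fix each of the cells $0, 1, \ldots, m-1$ individually; since no non-trivial element of $\D_r$ fixes $m \ge 2$ consecutive cells pointwise, the $\D_r$-component is trivial and $g$ lies in the base group $\Sym_k^r \cap G$. The requirement that $g$ fixes $p_i$ in cell $i$ for each $i \in \{0, \ldots, m - 1\}$ contributes a factor of $|L_\omega|$ per cell. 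For the remaining $r - m$ cells, the available freedom is constrained by the edge-consistency conditions defining $G$: propagating the fixed-vertex constraints at $p_0$ and $p_{m-1}$ outward along the cycle, one finds that the $m$ cells directly adjacent to $P$ are fully determined (contributing a factor of $1$ each), while the remaining $r - 2m = 2m(\ell - 1)$ cells each independently contribute a factor of $|L_{(A)}|$. The resulting product $|L_\omega|^m \cdot |L_{(A)}|^{2m(\ell - 1)}$ matches the claim.

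The main obstacle is the correct definition of $G$, in particular coordinating the block-swapping elements of $L$ with the reflections of $\D_r$ in a way that realises simultaneously the right local group and the right arc-stabiliser. The subsequent verifications amount to routine wreath-product bookkeeping, but the propagation argument for cells outside the path is the technical heart of the count and requires careful tracking of how the $L$-consistency imposed on each edge cascades around the cycle.
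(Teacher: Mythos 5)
Your high-level strategy (build $G$ inside $\Aut(\C(k,2\ell m,1))$, pass to the induced action on traversing paths, and extract $|G_{uv}|$ as $|G|/|\A(\Gamma)|$) is the same as the paper's, but your actual definition of $G$ is wrong, and the error is fatal rather than cosmetic. A vertex of $\Gamma=\C(k,2\ell m,m-1)$ is a traversing path spanning $m-1$ \emph{consecutive} cells, so the two blocks of its neighbourhood are the two cells flanking this window, which lie at distance $m$ apart on the cycle. The $L$-coupling must therefore be imposed between cells $i$ and $i+m$ (this is exactly what the paper's subgroup $N_0=\{\chi(a,0)\chi(b,m)\mid (a,b)\in K\}$ does, replicated with period $2m$, with the extra subgroup $M$ supplying the $|L_{(A)}|$-freedom on individual cells), not between \emph{adjacent} cells as in your definition. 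With adjacent-cell consistency, the group induced on a pair of cells at distance $m$ is the $m$-fold composite of the relation $K\le\Sym_k\times\Sym_k$, which is in general not permutation isomorphic to $K$, so the locally-$L$ claim you assert "by construction" is false; and the group can even fail to be transitive. Concretely, take $L=\D_6$ with its two blocks of size $3$: then $K$ is a diagonal copy of $\Sym_3$ in $\Sym_3\times\Sym_3$ and $L_{(A)}=1$, so your edge-consistency forces the restriction to each cell to be determined by the restriction to the previous cell; every element of your $G$ is then determined by its image in $\D_{2\ell m}$ together with its restriction to one cell, giving $|G|\le 4\ell m\cdot 6$, while $|\V(\Gamma)|=2\ell m\cdot 3^{m-1}$. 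For $m\ge 4$ your $G$ is not even vertex-transitive, and the target value $|G_{uv}|=|L_\omega|^m=2^m$ (this is the case that witnesses graph-type $\Pol$ for $\D_6$) is unattainable. There is also a parity defect: labelling the cells alternately $A,B$ around the cycle, the two cells flanking a vertex carry the \emph{same} label whenever $m$ is even, so your identification with the blocks of $L$ breaks down; the paper's distance-$m$, period-$2m$ coupling avoids this.

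Two further steps would fail even if the definition were repaired. First, your stabiliser count is internally inconsistent with your own consistency condition: given the restriction $c_i$ to cell $i$, the number of $c_{i+1}$ with $(c_i,c_{i+1})\in K$ is $|L_{(A)}|$ \emph{regardless of position}, so propagation gives a factor $|L_{(A)}|$ for every non-path cell, i.e.\ exponent $2\ell m-m$ rather than the required $2m(\ell-1)=2\ell m-2m$ (closing the cycle imposes a single coset condition, not $m$ cells' worth of conditions); your claim that the $m$ cells adjacent to $P$ are "fully determined" is unsupported and does not follow from the stated condition. Second, realising the block-swapping part of $L$ by plain reflections of $\D_r$ tacitly assumes $L$ contains a block-swapping involution; in general a chosen $h\in L\setminus K$ satisfies $h^2=(r,r)\neq 1$, which is why the paper must work with the twisted elements $\sigma=\chi(r,-1)s$ and $\tau=\chi(r,m+1)\cdots\chi(r,2m-1)t$ and verify by computation that $\tau^{-1}\sigma^m$ induces $h$ on $\Gamma(v)$. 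These twists, together with the distance-$m$ coupling and the intersection computation $|M\cap N|=|L_{(A)}|^{2m}$ that yields $|\langle M,N\rangle|=|L_{(A)}|^{2m(\ell-1)}(|L|/2)^m$, constitute the technical heart of the proof, and none of them survives in your setup.
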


\begin{proof}
Let $K$ be the kernel of the action of $L$ on $\{A,B\}$. Clearly, $K$ is a normal subgroup of index $2$ in $L$. Fix an element $h\in L\setminus K$ and observe that $A^h=B$ and $B^h=A$. We can label the points of $A$ by $\ZZ_k\times\{0\}$ and the points of $B$ by $\ZZ_k\times\{1\}$ in such a way that $(x,0)^h=(x,1)$ for every $x\in\ZZ_k$. With respect to this fixed labeling, we can view $L$ as a subgroup of $\Sym_k\wr\Sym_2=(\Sym_k\times \Sym_k)\rtimes \Sym_2$, with $K\le \Sym_k\times \Sym_k$, where, for every $(a,b)\in K \le \Sym_k\times \Sym_k$ and every $x\in\ZZ_k$, we have 
\begin{equation}
(x,0)^{(a,b)}=(x^a,0)\>  \hbox{ and } \> (x,1)^{(a,b)}=(x^b,1).
\end{equation}

Let $r\in \Sym_k$ be such that $(x,1)^h = (x^r,0)$ for every $x\in \ZZ_k$. Then $(x,0)^{h^2} = (x,1)^h = (x^r,0)$ and $(x,1)^{h^2} = (x^r,0)^h = (x^r,1)$,
implying that 
\begin{equation}\label{eq:h2}
h^2=(r,r).
\end{equation}

To summarise, the action of $h$ on $A \cup B$ is given by:
\begin{equation}
\label{eqn:h}
 (x,0)^h = (x,1) \quad \hbox{ and } \quad (x,1)^h = (x^r,0).
\end{equation}

If $(a,b)\in K$ and $x\in\ZZ_k$, then $(x,1)^{(a,b)^h}=(x,0)^{(a,b)h}=(x^a,0)^h=(x^a,1)$ and $(x,0)^{(a,b)^h}=(x,0)^{h(a,b)^{h^2}h^{-1}}=(x,1)^{(a^r,b^r)h^{-1}}=(x^{b^r},1)^{h^{-1}}=(x^{b^r},0)$. This shows that
\begin{equation}
\label{eq:abh}
(a,b)^h=(b^r,a).
\end{equation}

Let $\Lambda=\C(k,2\ell m,1)$. Recall that $\V(\Lambda)=\ZZ_k\times \ZZ_{2\ell m}$ and that $\Sym_k \wr \D_{2\ell m}$ acts naturally as a group of automorphisms of $\Lambda$. We now define some permutations of $\V(\Lambda)$. Let $(x,y)\in \ZZ_k\times \ZZ_{2\ell m}$ be a vertex of $\Lambda$, let $(x,y)^s=(x,y+1)$ and let $(x,y)^t=(x,-y)$. Moreover, for any $c\in\Sym_k$ and any $i\in \ZZ_{2\ell m}$, let 

\begin{displaymath}
   (x,y)^{[c]_i} = \left\{
     \begin{array}{ll}
       (x^c,y) & {\rm if}\; y = i ,\\
       (x,y) & {\rm otherwise.}
     \end{array}
   \right.
\end{displaymath}  

Note that for every $c,d \in \Sym_k$ and every $i, j \in \ZZ_{2\ell m}$, we have
\begin{equation}
\label{eqn:new}
  [c]_i [d]_i = [cd]_i \> \hbox { and } \> [c]_i [d]_j = [d]_j [c]_i \hbox { if } i\not = j.
\end{equation}

Clearly, $s$, $t$ and $[c]_i$ are automorphisms of $\Lambda$. Note that $t^2=1$ and $tst=s^{-1}$. Moreover, for every $i,j \in \ZZ_{2\ell m}$,
we have  
\begin{equation}
\label{eqn:cisj}
[c]_i^{s^j}=[c]_{i+j} \> \hbox{ and } \> [c]_i^t=[c]_{-i}.
\end{equation}

A typical element of $L_{(A)}$ can be written in the form $(1,b)$ with $b\in \Sym_k$, and moreover, an element $(a,b) \in K$ belongs to $L_{(A)}$ if and only if $a=1$. Now let
\begin{equation}
\label{eqn:M0}
M_0 = \{ [b]_0 \mid (1,b) \in K\}
\end{equation}
and note that $M_0 \le \Aut(\Lambda)$. For $a\in \Sym_k$ and $i\in \ZZ_{2\ell m}$, let $\chi(a,i)$ be the automorphism of $\Lambda$ defined by
 \begin{equation}
 \label{chidef}
 \chi(a,i) = [a]_i \, [a]_{i+2m} \, [a]_{i+4m}\, \ldots\, [a]_{i+(2\ell-2)m}.
 \end{equation}
 
Note that for every $a,b\in \Sym_k$ the following holds:
\begin{equation}
\label{eqn:chi}
\begin{array}{l}
(\ref{eqn:chi}.1)\qquad \chi(a,i) = \chi(a,j) \hbox{ whenever } i \equiv j \hbox{ mod } 2m, \\
(\ref{eqn:chi}.2)\qquad\chi(a,i)\chi(b,i) = \chi(ab,i), \\
(\ref{eqn:chi}.3)\qquad\chi(a,i) \hbox{ and } \chi(b,j) \hbox{ commute whenever } i \not \equiv j \hbox{ mod } 2m, \\
(\ref{eqn:chi}.4)\qquad\chi(a,i)^s = \chi(a,i+1), \\
(\ref{eqn:chi}.5)\qquad\chi(a,i)^t = \chi(a,-i).
\end{array}
\end{equation}

Let 
$$N_0=\{\chi(a,0)\chi(b,m) \mid (a,b)\in K\}.$$
Using (\ref{eqn:chi}.2) and (\ref{eqn:chi}.3), one can see that $N_0$ is a subgroup of $\Aut(\Lambda)$. Recall that $r$ is the element of $\Sym_k$ such that $h^2 = (r,r)$ and let 
$$\sigma=\chi(r,-1)s.$$

Using~(\ref{eqn:chi}.3) and (\ref{eqn:chi}.4), it is easy to see that, for $i\in \{1,\ldots,2m\}$, we have

\begin{equation}
\label{eq:sigmapower}
\sigma^i = \chi(r,-i)\ldots \chi(r,-2)\chi(r,-1)s^i. 
\end{equation}

For any integer $i$, let 
\begin{equation}
\label{eq:Mi}
M_i=(M_0)^{\sigma^i}\> \hbox{ and } \> N_i=(N_0)^{\sigma^i}.
\end{equation}

We will now show that for every element $(a,b) \in K$ and  for every $j\in \ZZ_{2\ell m}$, we have:
\begin{equation}
\label{eqn:new2}
 (M_0)^{[a]_j}=(M_0)^{[b]_j}= M_0.
\end{equation}

Indeed, by definition, a typical element of $M_0$ is of the form $[\beta]_0$  such that $(1,\beta)\in K$. If $j\not = 0$, then by (\ref{eqn:new})  $[b]_j$ commutes with $[\beta]_0$ and hence centralises $M_0$. Suppose now that $j=0$. Then, by (\ref{eqn:new}), we have $[\beta]_0^{[b]_0} = [\beta^b]_0$. Since $(1,\beta)$ and $(a,b)$ are elements of $K$ and since $(1,\beta)^{(a,b)} = (1,\beta^b)$, we see that $(1,\beta^b)\in K$, which, by definition of $M_0$, implies that $[\beta^b]_0$ (and thus $[\beta]_0^{[b]_0}$) is in $M_0$. This shows that $[b]_j$ normalises $M_0$ for every $j$. Now recall that, by (\ref{eq:abh}), we have $(a,b)^h = (b^r,a)$. Since $h$ normalises $K$, this shows that $(b^r,a)\in K$. By applying the above argument with $(b^r,a)$ in place of $(a,b)$, we see that also $[a]_j$ normalises $M_0$, thus proving (\ref{eqn:new2}). 

By (\ref{chidef}) and (\ref{eqn:new2}),  for every element $(a,b) \in K$ and  for every $j\in \ZZ_{2\ell m}$, we have:
\begin{equation}
\label{eqn:new3}
(M_0)^{\chi(a,j)}=(M_0)^{\chi(b,j)} = M_0.
\end{equation}
 
 In particular, since since $(r,r)\in K$, the group $M_0$ is normalised by $\chi(r,j)$.
Together with (\ref{eqn:cisj}), (\ref{eq:sigmapower}) and (\ref{eq:Mi}), the latter implies that:
 \begin{equation}
 \label{eqn:Mis}
 M_i= (M_0)^{s^i} = \{ [\beta]_i \mid (1,\beta) \in K\}.
 \end{equation}

Let us now show that the following holds for every $(a,b)\in K$ and $i, j \in \ZZ_{2\ell m}$:
 \begin{equation}
 \label{eqn:Michi}
 (M_i)^{\chi(a,j)}=(M_i)^{\chi(b,j)}=(M_i)^{\chi(r,j)} = M_i.
\end{equation}

Indeed:
 \begin{eqnarray*}
 (M_i)^{\chi(a,j)} & {{(\ref{eqn:Mis})}\atop{=}} &  (M_0)^{s^i \chi(a,j)} \\
                             & {{(\ref{eqn:chi}.4)} \atop{=}} &   (M_0)^{ \chi(a,j-i) s^i} \\
                              & {{(\ref{eqn:new3})} \atop{=}} & (M_0)^{ s^i} \\
                               & {{(\ref{eqn:Mis})}\atop{=}} & M_i,
 \end{eqnarray*}
 and similarly,  $(M_i)^{\chi(b,j)} = M_i$, thus proving (\ref{eqn:Michi}).
\medskip

Furthermore, the definition (\ref{eq:Mi}) of $N_i$, (\ref{eqn:chi}.3) and (\ref{eq:sigmapower}) imply that
\begin{equation}
\label{eqn:Ki}
N_i=\{\chi(a,i)\chi(b,i+m)\mid (a,b)\in K\} \hbox{ for every } i\in \{1,\ldots,m-1\}.
\end{equation} 

We shall now show that $N_{m+i} = N_i$. Let $(a,b)\in K$ and observe that:
\begin{eqnarray*}
\bigl(\chi(a,0)\chi(b,m)\bigr) ^{\sigma^m} 
&{(\ref{eq:sigmapower})}\atop{=}&
\bigl(\chi(a,0)\chi(b,m)\bigr)^{\chi(r,-m)\ldots \chi(r,-2)\chi(r,-1)s^m}\\
&{(\ref{eqn:chi}.3)}\atop{=}&
\bigl(\chi(a,0)\chi(b,m)\bigr)^{\chi(r,-m)s^m}\\
&{(\ref{eqn:chi}.1,\, \ref{eqn:chi}.2)}\atop{=}&
\bigl(\chi(a,0)\chi(b^r,m)\bigr)^{s^m}\\
&{(\ref{eqn:chi}.3,\, \ref{eqn:chi}.4)}\atop{=}&
\chi(b^r,0)\chi(a,m).
\end{eqnarray*}

Since $(a,b)\in K$, by~$(\ref{eq:abh})$ we have $(a,b)^h=(b^r,a)\in K$ and hence $\chi(b^r,0)\chi(a,m)\in N_0$. This shows that $\sigma^m$ normalises $N_0$. However, $N_m = (N_0)^{\sigma^m}$ by definition, implying that $N_m = N_0$, and thus 
\begin{equation}
\label{eqn:Kim}
N_{i+m}=N_i \hbox{ for every integer } i.
\end{equation}

Let us consider the automorphism $\tau$ of $\Lambda$ defined by
\begin{equation}
\label{eqn:tau}
\tau=\chi(r,m+1)\chi(r,m+2) \ldots \chi(r,2m-1)t.
\end{equation}

Note that (\ref{eqn:cisj}) and (\ref{eqn:Mis}) imply
\begin{equation}
\label{eqn:Mit}
(M_i)^\tau= M_{-i} \hbox{ for every integer } i.
\end{equation}

We shall now show that 
\begin{equation}
\label{eqn:Ki-i}
(N_i)^\tau= N_{-i} \hbox{ for every integer } i.
\end{equation}

It follows from (\ref{eqn:chi}.3) that $\tau$ centralises $N_0$ and hence (\ref{eqn:Ki-i}) holds for $i=0$. By (\ref{eqn:Kim}), it thus suffices to show (\ref{eqn:Ki-i}) for  $i\in \{1,\ldots,m-1\}$. By (\ref{eqn:Ki}), a typical element of $N_i$ is of the form $\chi(a,i)\chi(b,m+i)$ for some $(a,b)\in K$. Then

\begin{eqnarray*}
\bigl(\chi(a,i)\chi(b,m+i)\bigr)^\tau
&(\ref{eqn:tau})\atop{=}& \bigl(\chi(a,i)\chi(b,m+i)\bigr)^{\chi(r,m+1)\chi(r,m+2)\ldots \chi(r,2m-1)t}\\
&{(\ref{eqn:chi}.2,\, \ref{eqn:chi}.3)}\atop{=}&
\bigl( \chi(b^r,m+i)\chi(a,i)\bigr)^t\\
&{(\ref{eqn:chi}.1,\, \ref{eqn:chi}.5)}\atop{=}&
\chi(b^r,m-i)\chi(a,-i).
\end{eqnarray*} 

Since $(a,b)\in K$,  by (\ref{eq:abh}) we have $(b^r,a)\in K$ and hence $\chi(b^r,m-i)\chi(a,{-i})\in N_{m-i}$.  This shows that $(N_i)^\tau=N_{m-i}$, which, by (\ref{eqn:Kim}), equals $N_{-i}$. This completes the proof of (\ref{eqn:Ki-i}).

Let $M$ ($N$, respectively) be the group generated by all $M_i$ ($N_i$, respectively), $i\in \ZZ$. By (\ref{eqn:Mis}) it follows that $M_i = M_{i+2\ell m}$. From this and from (\ref{eqn:Ki}), we deduce that $M = \langle M_0, \ldots, M_{2\ell m-1} \rangle$ and $N=\langle N_0,\ldots,N_{m-1}\rangle$. Further, by (\ref{eqn:chi}.3), (\ref{eqn:Ki}) and (\ref{eqn:Kim}), it follows that
\begin{equation}
\label{eq:N}
M =M_0\times\cdots\times M_{2\ell m-1} \hbox{ and } N=N_0\times\cdots\times N_{m-1}.
\end{equation}

Let $G=\langle M,N,\sigma,\tau\rangle$. By (\ref{eqn:Mit}) and (\ref{eqn:Ki-i}), it follows that  $M$ and $N$ are normalised by $\tau$. By definition, they are also normalised by $\sigma$, implying that $\langle M, N\rangle$ is normal in $G$.  Recall that $\V(\Lambda)$ admits a natural partition $\cP=\{\ZZ_k\times\{y\}\mid y\in\ZZ_{2\ell m}\}$ and observe that $\cP$ is in fact the set of orbits of $\langle M,N\rangle$ on $\V(\Lambda)$ and hence is $G$-invariant.

By the definitions of $\sigma$ and $\tau$, it follows that the permutation group induced by the action of $G$ on $\cP$ is isomorphic to the dihedral group $\D_{2\ell m}$ in its natural action of degree $2\ell m$.

We will now show that $\langle M,N\rangle$ is the kernel of the action of $G$ on $\cP$. Observe that it suffices to show that  $G/\langle M,N\rangle$ acts faithfully on $\cP$ or, equivalently, that $G/\langle M,N\rangle$ is isomorphic to $\D_{2\ell m}$. We will show this by proving that $\sigma^{2\ell m}$, $\tau^2$ and $\sigma\sigma^\tau$ are all contained in $\langle M,N\rangle$. We begin by computing $\sigma\sigma^\tau$. Observe that by  (\ref{eqn:chi}.3) and (\ref{eqn:chi}.4) we have $s^{\chi(a,i)} = \chi(a,i-1)\chi(a^{-1},i)s$. By applying this repeatedly and using (\ref{eqn:chi}.1), (\ref{eqn:chi}.2) and (\ref{eqn:chi}.3) we deduce that
$$s^{\chi(r,m+1)\chi(r,m+2)\cdots\chi(r,2m-1)} = \chi(r,m)\chi(r^{-1},-1)s,$$ 
and therefore 
\begin{eqnarray*}
\sigma^\tau&=& (\chi(r,-1)s)^{\chi(r,m+1)\chi(r,m+2)\cdots\chi(r,2m-1)t}\\
&(\ref{eqn:chi}.2, \ref{eqn:chi}.3)\atop=& (\chi(r,-1)\chi(r,m)\chi(r^{-1},-1)s)^t\\
&(\ref{eqn:chi}.1, \ref{eqn:chi}.5)\atop=&\chi(r,m)s^{-1}.
\end{eqnarray*} 

Computing $\sigma\sigma^\tau$ is now easy:
\begin{equation}
\label{eqn:sst}
\sigma\sigma^\tau \> = \> \chi(r,-1)s\chi(r,m)s^{-1} \> = \> \chi(r,-1)\chi(r,m-1).
\end{equation}

By~$(\ref{eq:h2})$, we know that $h^2=(r,r)\in K$ and hence, by (\ref{eqn:Ki}), $\chi(r,i)\chi(r,m+i)\in N_i$ and, in particular, $\sigma\sigma^\tau\in N$. Now,

\begin{eqnarray*}
\tau^2&=& \bigl(\chi(r,m+1)\cdots \chi(r,2m-1)t\bigr)^2\\
&(\ref{eqn:chi}.5)\atop=&\chi(r,m+1)\cdots \chi(r,2m-1)\chi(r,1)\cdots \chi(r,m-1)\\
&(\ref{eqn:chi}.3)\atop=&(\chi(r,1)\chi(r,m+1))(\chi(r,2)\chi(r,m+2))\cdots (\chi(r,m-1)\chi(r,2m-1)).\\
\end{eqnarray*} 

Again, $\chi(r,i)\chi(r,m+i)\in N_i$ for all $i\in\ZZ_{2\ell m}$ and hence $\tau^2\in N$. Finally, by~$(\ref{eq:sigmapower})$, we have 

\begin{eqnarray*}
\sigma^{2m}&=& \chi(r,0)\chi(r,1)\cdots \chi(r,2m-1)\\
&=&(\chi(r,0)\chi(r,m))(\chi(r,1)\chi(r,m+1))\cdots (\chi(r,m-1)\chi(r,2m-1))\in N.\\
\end{eqnarray*} 

We have just shown that $\sigma^{2m}$, $\tau^2$ and $\sigma\sigma^\tau$ are all contained in $N$, and therefore, that $\la M, N\ra$ is the kernel of the action of $G$ on $\cP$, as claimed. 

We shall now determine the order of $\la M, N\ra$. Note that by (\ref{eqn:Michi}), (\ref{eqn:Ki}) and (\ref{eq:N}),  the group $N$ normalises $M$,
implying that  $\la M, N\ra=MN$. Let us first determine the order of $M\cap N$.

Let $\alpha$ be an arbitrary element of $M\cap N$. Since $\alpha \in N$, we can write
\begin{equation*}
 \alpha = \prod_{j=0}^{m-1}{\chi(a_j,j)}\chi(b_j,j+m)
\end{equation*}
for some permutations $a_0, a_1, \ldots, a_{m-1}$, $b_{0}, b_{1}, \ldots, b_{m-1}$ of $\ZZ_k$ such that $(a_{j},b_{j}) \in K$ for all $j\in\{0,\ldots, m-1\}$. If we write $a_{j+m} = b_{j}$, we obtain
\begin{equation}
\label{eqn:alpha}
 \alpha = \prod_{j=0}^{2m-1}\chi(a_j,j)
\end{equation}
with $(a_j,a_{j+m}) \in K$ for all $j\in\{0,\ldots, m-1\}$. Now observe that for every $i\in \ZZ_{2\ell m}$, the set $\ZZ_k\times \{i\}  \subseteq \V(\Lambda)$ is preserved by $\alpha$. Furthermore, for every $j\in \{0,\ldots,2m-1\}$, the restriction of $\alpha$ to $\ZZ_k \times \{j\} $ (when viewed as a permutation on $\ZZ_k$) is in fact $a_{j}$. On the other hand, since $\alpha \in M$, such a restriction $a_{j}$ has to satisfy the defining property $(1,a_{j}) \in K$.

Conversely, let $a_{0}, a_{1}, \ldots, a_{2m-1}$ be arbitrary permutations of $\ZZ_k$ such that $(1,a_{j}) \in K$ for all $j\in \{0,\ldots,2m-1\}$ and let $\alpha$ be as in (\ref{eqn:alpha}). We shall now show that $\alpha$  is an element of $M\cap N$. 

The fact that $\alpha$ is in $M$ follows directly from (\ref{eqn:Mis}) and from the fact that $(1,a_j) \in K$ for each $j$. In order to prove that $\alpha\in N$, it suffices to show that $(a_{j},a_{j+m}) \in K$ (and thus $\chi(a_j,j) \chi(a_{j+m},j+m) \in N_j$)  for every $j\in \{0,\ldots, m-1\}$. Fix such an integer $j$ and let $a=a_{j}$ and $b=a_{j+m}$. Since $(1,a)\in K$ and since $h$ normalises $K$, it follows that $(1,a)^{h^{-1}} \in K$. However, $(1,a)^{h^{-1}}=(a,1)$, implying that $(a,b) = (a,1)(1,b) \in K$, as required. We have thus shown that $\alpha \in M \cap N$.

To summarise, the intersection $M\cap N$ consists precisely of those $\alpha$ from (\ref{eqn:alpha}) for which
$(1,a_j) \in K$ for all $j\in \{0,\ldots,2m-1\}$. In particular, 
$$
 |M\cap N| = |\{b \mid (1,b) \in K\}|^{2m} = |L_{(A)}|^{2m},
$$ 
which implies that
$$
|\langle M, N \rangle| = |M| |N| / |M\cap N| = |L_{(A)}|^{2\ell m} |K|^m / |L_{(A)}|^{2m} = |L_{(A)}|^{2 m (\ell -1)} (|L|/2)^m.
$$
Thus:
\begin{equation}
\label{eqn:|G|}
|G| = |\D_{2\ell m}| |\langle M, N \rangle| = 4\ell m  |L_{(A)}|^{2 m (\ell -1)} (|L|/2)^m.
\end{equation}
\medskip

We shall now turn our attention to the graph  $\Gamma$. Recall that $\Gamma=\C(k,2\ell m,m-1)$ and that the vertices of $\Gamma$ are the traversing paths of length $m-2$ in $\Lambda$; that is:
$$\V(\Gamma)=\{ (u_1,i)(u_2,i+1)\ldots (u_{m-1},i+m-2) \mid (u_1, \ldots, u_{m-1}) \in(\ZZ_k)^{m-1},i\in\ZZ_{2\ell m}\}.$$
Further, recall that since every automorphism of $\Lambda$ preserves the set of all such paths, there exists a natural faithful action of $G$ as a group of automorphisms of $\Gamma$. Moreover, observe that for any fixed $i\in \ZZ_{2\ell m}$, the group $N$ acts transitively on the set $\V_i = \{(u_1,i)(u_2,i+1)\ldots (u_{m-1},i+m-2) \mid (u_1, \ldots, u_{m-1}) \in(\ZZ_k)^{m-1}\}$, and that the group $\la \sigma \ra$ cyclically permutes the family $\{ \V_i \mid i\in \ZZ_{2\ell m}\}$. Since the latter family is a partition of $\V(\Gamma)$, this shows that $\la N, \sigma \ra$ (and thus $G$) is transitive on $\V(\Gamma)$.

Let  $v=(0,1)(0,2)\ldots(0,m-1)$ be a traversing path in $\Lambda$ of length $m-2$, interpreted as a vertex of $\Gamma$. Observe that the neighbourhood $\Gamma(v)$ decomposes into a disjoint union of subsets
\begin{eqnarray*}
\Gamma^-(v) & = & \{(j,0)(0,1)\ldots(0,m-2)\mid j\in\ZZ_k\}, \\
\Gamma^+(v) & = & \{(0,2)\ldots(0,m-1)(j,m)\mid j\in\ZZ_k\},
 \end{eqnarray*}
with $\Gamma^-(v)$ and $\Gamma^+(v)$ being blocks of imprimitivity for $G_v$. Hence there is a natural identification of the sets $\Gamma^-(v)$ and $\Gamma^+(v)$ with the sets $\ZZ_k \times \{0\}$ and $\ZZ_k \times \{m\}$, respectively. This induces an action of $G_v$ on $\ZZ_k\times\{0,m\}$.
 
Recall that the sets $A$ and $B$ were identified with the sets $\ZZ_k\times \{0\}$ and $\ZZ_k\times \{1\} $, which gives rise to a further identification of $\Gamma^-(v)$ with $A$ and $\Gamma^+(v)$ with $B$ (where $(x,1) \in B$ is identified with $(x,m) \in \Gamma^+(v)$). In particular, $G_v$ can be viewed as acting on $A\cup B$. Since $\Gamma^-(v)$ and $\Gamma^+(v)$ are blocks of imprimitivity for $G_v$, so are $A$ and $B$.

Observe that an element of $G_v$, when viewed as acting on $A\cup B$, preserves each of $A$ and $B$ setwise if and only if, when viewed as acting on $\V(\Lambda)$, it preserves each of the sets $\{i\} \times \ZZ_{2\ell m} \in \cP$. Now recall that the kernel of the action of $G$ on $\cP$ is $\la M,N\ra$ and hence the kernel of the action of $G_v$ on the partition $\{A,B\}$ of $A \cup B$ is $\la M,N\ra_v$. 

We shall now prove that $G_v^{\Gamma(v)}$ is permutation isomorphic to $L$.   We begin by proving that $M_v^{\Gamma(v)} \le N_v^{\Gamma(v)}$. Note that by (\ref{eq:N}), it suffices to prove that $(M_i)_v^{\Gamma(v)} \le N_v^{\Gamma(v)}$ for all $i\in \{0,\ldots,2\ell m - 1\}$. Clearly, if $i\not\in \{0,m\}$, then $(M_i)_v$ acts trivially on $\ZZ_k\times \{0,m\}$ and hence on $\Gamma(v)$. Further, observe that both $M_0$ as well as $M_m$ fix the vertex $v$. Recall that a typical element of $M_0$ is of the form $[b]_0$ for $b\in \Sym_k$ such that $(1,b) \in K$. Since $K$ is normal in $L$, it follows that $(1,b)^{h^{-1}} \in K$. However, $(1,b)^{h^{-1}} = (b,1)$. It follows that the element $\chi(b,0)\chi(1,m)$ is in $N_0$. However, the latter element clearly fixes $v$ and induces the same permutation on $\Gamma(v)$ as the element $[b]_0$ of $M_0$ does. In particular, $(M_0)^{\Gamma(v)} \le N_v^{\Gamma(v)}$. A similar computation shows that  $(M_m)^{\Gamma(v)} \le N_v^{\Gamma(v)}$, and therefore, that $M_v^{\Gamma(v)} \le N_v^{\Gamma(v)}$.

We shall now show that $N_v^{\Gamma(v)}$ is permutation isomorphic to $K$. Recall that  $N=N_0 \times \cdots \times N_{m-1}$ and note that, for $i\not = 0$, the subgroup $(N_i)_v$  acts trivially on $ \ZZ_k\times\{0,m\} $, and hence on  $\Gamma(v)$. Moreover, $(N_0)_v=N_0$, implying that $N_v^{\Gamma(v)} = (N_0)^{\Gamma(v)}$. Since $N_0$ consists of the elements $\chi(a,0)\chi(b,m)$ for $(a,b) \in K$, the identification of $\Gamma(v)$ with $\ZZ_k\times \{0,m\}$ and also with $A\cup B$ clearly implies that $(N_0)^{\Gamma(v)}$ is permutation isomorphic to $K$ and hence $N_v^{\Gamma(v)}$ is permutation isomorphic to $K$, as claimed.

Since $M_v^{\Gamma(v)} \le N_v^{\Gamma(v)}$, this implies that $\la M,N\ra_v^{\Gamma(v)}$ is permutation isomorphic to $K$. Now recall that $\la M,N\ra_v^{\Gamma(v)}$ is in fact the kernel of the action of $G_v^{\Gamma(v)}$ on the partition of $\{A,B\}$ of $A\cup B$ (after the usual identification of $\Gamma(v)$ with $A\cup B$).
 To conclude the proof that $G_v^{\Gamma(v)}$ is permutation isomorphic to $L$, it thus suffices to exhibit an element of $G_v$ which acts on $A\cup B$ as the permutation $h$.

By~$(\ref{eq:sigmapower})$, we have $\sigma^m=\chi(r,-m)\ldots \chi(r,-1)s^m$. By~(\ref{eqn:tau}), it follows that 
\begin{equation}\label{tausigma}
\tau^{-1}\sigma^m=\chi(r,m)ts^m.
\end{equation}
Clearly, both $\chi(r,m)$ and $ts^m$ fix $v$ and hence so does $\tau^{-1}\sigma^m$. Let us now compute the permutation induced by $\tau^{-1}\sigma^m$ on $A \cup B$. Let $(j,1)$ be an arbitrary element of $B$ and recall that this element is represented by $(0,2)\ldots(0,m-1)(j,m)\in\Gamma^+(v)$. Now, note that
\begin{eqnarray*}
\bigl((0,2)\ldots(0,m-1)(j,m)\bigr)^{\tau^{-1}\sigma^m}&(\ref{tausigma})\atop=&\bigl((0,2)\ldots(0,m-1)(j,m)\bigr)^{\chi(r,m)ts^m}\\
&(\ref{chidef})\atop=&\bigl((0,2)\ldots(0,m-1)(j^r,m)\bigr)^{ts^m}\\
&=&\bigl((j^r,0)(0,1)\ldots(0,m-2)\bigr).
\end{eqnarray*} 
This shows that $(j,1)^{\tau^{-1}\sigma^m}=(j^r,0)\in A$.

Similarly,  let $(j,0)$ be an arbitrary element of $A$. This element is represented by $(j,0)(0,1)\ldots(0,m-2)\in\Gamma^-(v)$ and an analogous computation yields:
$$\bigl((j,0)(0,1)\ldots(0,m-2)\bigr)^{\tau^{-1}\sigma^m}=(0,2)\ldots(0,m-1)(j,m).$$
This shows that $(j,0)^{\tau^{-1}\sigma^m}=(j,1)\in B$. By (\ref{eqn:h}), this shows that $\tau^{-1}\sigma^m$, viewed as a permutation on $A\cup B$, is equal to $h$. This concludes the proof of the fact that $G_v^{\Gamma(v)}$ is permutation isomorphic to $L$. 

Since $\Gamma$ is $G$-vertex-transitive and $L$ is transitive, it follows that $\Gamma$ is $G$-arc-transitive. It remains to determine the order of the arc-stabiliser $G_{uv}$ of an arc $uv$ of $\Gamma$. Since $|\V(\Gamma)| = 2\ell m k^{m-1}$ and the valence of $\Gamma$ is $2k$, it follows that $|\A(\Gamma)| = 4\ell m k^{m}$. Further, since $L$ is a transitive permutation group of degree $2k$, it follows that $|L|/2k=|L_\omega|$, where $L_\omega$ is a point-stabiliser in $L$. Using (\ref{eqn:|G|}) for the order of $G$, we thus get the following:

$$|G_{uv}| = \frac{4\ell m  |L_{(A)}|^{2 m (\ell -1)} (|L|/2)^m }{4 \ell mk^{m}} =|L_{(A)}|^{2 m (\ell -1)} |L_\omega|^m.$$
\end{proof}

\begin{corollary}\label{cor:cons}
Let $L$ be a transitive permutation group of degree $2k$ with a system of imprimitivity consisting of two blocks $A$ and $B$, let $L_\omega$ be a point-stabiliser in $L$ and let $L_{(A)}$ be the pointwise stabiliser of $A$ in $L$. Then $L$ is $f$-graph-unrestrictive where $f(n)=|L_{(A)}|^{\frac{n}{k}-4} |L_\omega|^2$. In particular, if $L_{(A)}\neq 1$, then $L$ has graph-type $\Exp$.

Moreover, if  $L_\omega \not = 1$, then $L$ is $\Pol$-graph-unrestrictive; in fact, $L$ is $n^{\alpha}$-graph-unrestrictive for every $\alpha$ with $0<\alpha < \frac{\log |L_\omega|}{\log k}$. 
\end{corollary}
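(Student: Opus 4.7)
My plan is that this corollary is essentially an asymptotic repackaging of the formula $|G_{uv}|=|L_{(A)}|^{2m(\ell-1)}|L_\omega|^m$ and the vertex count $|\V(\Gamma)|=2\ell m k^{m-1}$ produced by Theorem~\ref{theo:cons}. I would exploit two different regimes of the parameters $(\ell,m)$: fixing $m=2$ and letting $\ell$ grow for the exponential part, and fixing $\ell=1$ and letting $m$ grow for the polynomial part.

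For the first (exponential) statement, I would specialise Theorem~\ref{theo:cons} to $m=2$ with $\ell\geq 1$ arbitrary. Then $n:=|\V(\Gamma)|=4\ell k$, so $\ell=n/(4k)$, and substituting into the formula yields
\[
 |G_{uv}|=|L_{(A)}|^{4(\ell-1)}|L_\omega|^2=|L_{(A)}|^{n/k-4}|L_\omega|^2=f(n).
\]
Since $\ell$ can be made arbitrarily large (making $n$ arbitrarily large), this is exactly the definition of $f$-graph-unrestrictiveness. If $L_{(A)}\neq 1$, then $|L_{(A)}|\geq 2$ and $f$ belongs to the class $\Exp$, so $L$ is $\Exp$-graph-unrestrictive; combined with Theorem~\ref{the:exp} (which guarantees $\Exp$-graph-restrictiveness for every transitive $L$), we conclude that $L$ has graph-type $\Exp$.

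For the polynomial statement, I would instead fix $\ell=1$ and apply Theorem~\ref{theo:cons} for every integer $m\geq 2$. The factor $|L_{(A)}|^{2m(\ell-1)}$ collapses to $1$, leaving $|G_{uv}|=|L_\omega|^m$ while $n=2mk^{m-1}$. As $m\to\infty$, $n\to\infty$ as well, and
\[
 \frac{\log|G_{uv}|}{\log n}=\frac{m\log|L_\omega|}{\log(2m)+(m-1)\log k}\longrightarrow \frac{\log|L_\omega|}{\log k}.
\]
Thus for any $\alpha<\log|L_\omega|/\log k$, this ratio eventually exceeds $\alpha$, so the resulting sequence of locally-$L$ pairs satisfies $|G_{uv}|\geq n^\alpha$ for all sufficiently large $m$. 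This verifies $n^\alpha$-graph-unrestrictiveness. In particular, if $L_\omega\neq 1$, then $\log|L_\omega|/\log k>0$, and one can choose a positive $\alpha$, whence $L$ is $\Pol$-graph-unrestrictive.

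There is no real obstacle here beyond bookkeeping: the construction and the order computation have already been carried out in Theorem~\ref{theo:cons}, so only the elementary asymptotic comparison between $|L_\omega|^m$ and $(2mk^{m-1})^\alpha$ remains. The only subtle point to make explicit is that the two claims of the corollary come from genuinely different choices of the free parameters $\ell,m$ in Theorem~\ref{theo:cons}, and that in the polynomial regime the factor $|L_{(A)}|^{2m(\ell-1)}$ contributes nothing (so the polynomial bound holds regardless of whether $L_{(A)}$ is trivial).
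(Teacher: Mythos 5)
Your proposal is correct and follows essentially the same route as the paper: the paper's own proof likewise specialises Theorem~\ref{theo:cons} to $m=2$ with $\ell\to\infty$ for the exponential bound and to $\ell=1$ with $m\to\infty$ for the polynomial one. The only cosmetic difference is in the final asymptotic comparison, where you compute the limit of $\log|G_{uv}|/\log n$ directly while the paper writes $\alpha=\log|L_\omega|/\log(k+\epsilon)$ and checks $n<(k+\epsilon)^m$ for large $m$; both are equivalent bookkeeping.
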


\begin{proof}
For $\ell\ge 1$, let $\Gamma_\ell = \C(k,4\ell,1)$. By Theorem~\ref{theo:cons} (applied with $m=2$), there exists $G_\ell \le \Aut(\Gamma_\ell)$ such that $(\Gamma_\ell,G_\ell)$ is locally-$L$ and $|(G_\ell)_{uv}| =|L_{(A)}|^{4 (\ell -1)} |L_\omega|^2$. Since $|\V(\Gamma_\ell)|=4k\ell$, the latter is equal to $|L_{(A)}|^{\frac{|\V(\Gamma_\ell)|}{k} -4} |L_\omega|^2$. This shows that $L$ is $f$-graph-unrestrictive where $f(n)=|L_{(A)}|^{\frac{n}{k}-4} |L_\omega|^2$, which is a function in the class $\Exp$ provided that $L_{(A)} \not =1$.

Now suppose that $L_\omega\neq 1$. For $m\geq 2$, let $\Gamma_m =\C(k,2m,m-1)$. By Theorem~\ref{theo:cons} (applied with $\ell = 1$),
there exists $G_m \le \Aut(\Gamma_m)$ such that $(\Gamma_m,G_m)$ is locally-$L$ and $|(G_m)_{uv}| =|L_\omega|^m$. 

Let $n=|\V(\Gamma_m)|$ and observe that $n= 2mk^{m-1}$. Let $c=|L_\omega|$ and fix  $\alpha$ such that $0<\alpha < \frac{\log c}{\log k}$.  Then there exists $\epsilon >0$ such $\alpha = \frac{\log c}{\log (k+\epsilon)}$. Furthermore, since $\frac{n}{(k+\epsilon)^m}=\frac{2mk^{m-1}}{(k+\epsilon)^m} \to 0$ as $m\to \infty$, there exists $m_0$ such that for every $m>m_0$, we have $n < (k+\epsilon)^m$, and thus $\frac{\log n}{\log(k+\epsilon)} < m$.
Hence
$$
 n^\alpha = n^{\frac{\log c}{\log (k+\epsilon)}} = c^{\frac{\log n}{\log (k+\epsilon)}} < c^m = |G_{uv}|.
$$
This proves that $L$ is $f$-graph-unrestrictive for $f(n) = n^\alpha$, and, in particular, that $L$ is $\Pol$-graph-unrestrictive.

\end{proof}

\section{Imprimitive groups of degree $6$ that do not admit a system of imprimitivity consisting of two blocks.}
\label{deg6}

The results proved so far in this paper together with previously known results are enough to settle  Problem~\ref{mainprob} for transitive permutation groups of degree at most $7$ with the exception of three groups of degree $6$ that are imprimitive but do not admit a system of imprimitivity consisting of two blocks of size $3$ (see Section~\ref{sec:smalldegree} for details). In this section, we will show that these three groups are $\Subexp$-graph-unrestrictive.

By~\cite{conway}, there are five transitive groups of degree $6$ that are imprimitive but do not admit a system of imprimitivity consisting of two blocks. Using the taxonomy of \cite{conway}, they are $A_4(6)$, $2A_4(6)$, $S_4(6d)$, $S_4(6c)$ and $2S_4(6)$. Note that $2S_4(6)$ is in fact isomorphic to the wreath product $\ZZ_2\wr S_3$ in its imprimitive action on $6$ points, while $A_4(6)$ is isomorphic to $\ZZ_2^2\rtimes C_3$ viewed as a subgroup of index $2$ in the wreath product $\ZZ_2\wr C_3$. The group $A_4(6)$ is thus a normal subgroup of index $4$ in $2S_4(6)$ and $2S_4(6)/A_4(6) \cong \ZZ_2^2$. The remaining three groups $2A_4(6)$, $S_4(6d)$ and $S_4(6c)$ are precisely the three groups $L$ with the property that $A_4(6) < L < 2S_4(6)$. The main result of this section is the following:
\begin{theorem}
\label{thm:deg6}
Let $L$ be a transitive permutation group of degree $6$ that is imprimitive but does not admit a system of imprimitivity consisting of two blocks. Then $L$ is $f$-graph-unrestrictive where $f(n)=4^{\sqrt{\frac{n}{3}}-1}$.
\end{theorem}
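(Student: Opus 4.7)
The plan is to construct, for every integer $m\geq 2$, a locally-$L$ pair $(\Gamma_m,G_m)$ with $|\V(\Gamma_m)|= 3m^2$ and $|(G_m)_{uv}|\geq 4^{m-1}$. Setting $n=3m^2$ then yields $|(G_m)_{uv}|\geq 4^{\sqrt{n/3}-1}=f(n)$, which is exactly what $f$-graph-unrestrictiveness of $L$ demands. The shape of these parameters (quadratic in $m$ for the vertex count, exponential in $m$ for the arc-stabiliser) explains why the resulting $f$ falls strictly between $\Pol$ and $\Exp$.

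Every one of the five groups $L$ contains $A_4(6)=\ZZ_2^2\rtimes C_3$ as a subgroup, and all five share the same system of imprimitivity consisting of three blocks of size two. The pointwise stabiliser in $A_4(6)$ of such a block is $\ZZ_2^2$, of order $4$, and this is the source of the base $4$ in the bound $4^{m-1}$. Following the philosophy of the Praeger--Xu-type construction in the proof of Theorem~\ref{theo:cons}, the plan is to build $\Gamma_m$ from an underlying ``skeleton'' cycle of length $m$ by replacing each position of the cycle with a fibre of size $3m$ (giving total vertex-count $3m^2$), and defining adjacency so that $\Gamma_m$ is $6$-valent with three blocks of size two appearing naturally at each vertex-neighbourhood. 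The group $G_m$ will be generated by $m$ commuting copies of $\ZZ_2^2$, one per skeleton position, together with a rotation $\sigma$ shifting the skeleton cyclically and, when the local group to be realised satisfies $L/K=\Sym_3$ with $K$ the kernel on the 3-block system (which is the case for all five $L$ except $A_4(6)$), an involution $\tau$ reversing the skeleton.

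A counting argument modelled on the proof of Theorem~\ref{theo:cons} should show that the arc-stabiliser of a chosen arc $(u,v)$ of $\Gamma_m$ contains the product of $m-1$ of the $m$ copies of $\ZZ_2^2$ (those at the skeleton positions disjoint from $\{u,v\}$), giving the required lower bound $|(G_m)_{uv}|\geq 4^{m-1}$. The main obstacle will be to verify that the local group $G_m^{\Gamma_m(v)}$ is permutation isomorphic to $L$, and not to a smaller or larger transitive group of degree $6$: this is to be done by a case-by-case analysis across the five groups $L$, producing an explicit element of $G_m$ that realises the prescribed quotient $L/K$ as a permutation of the three blocks inside the neighbourhood of $v$. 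A secondary obstacle is to ensure both the independence of the $m$ embedded copies of $\ZZ_2^2$ and their joint normality under $\sigma$ and $\tau$; these normalisation computations will parallel, but be more intricate than, the analysis of the groups $M$ and $N$ in the proof of Theorem~\ref{theo:cons}.
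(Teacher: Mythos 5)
There is a genuine gap, and it is structural rather than a matter of missing detail. Your construction lives over a cycle skeleton of length $m$ whose fibre partition is $G_m$-invariant: the $m$ copies of $\ZZ_2^2$ fix each fibre setwise, while $\sigma$ and $\tau$ act on the cycle. Consequently, for a vertex $v$ in fibre $F_i$ (and $m$ large), the stabiliser $(G_m)_v$ permutes the pair $\{\Gamma_m(v)\cap F_{i-1},\,\Gamma_m(v)\cap F_{i+1}\}$; transitivity of $(G_m)_v^{\Gamma_m(v)}$ forces the $6$ neighbours to split $3$--$3$ between the two adjacent fibres, and this pair of $3$-sets is then a system of imprimitivity of the local group consisting of two blocks. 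But the five groups $L$ in Theorem~\ref{thm:deg6} are, by definition, exactly the imprimitive groups of degree $6$ admitting \emph{no} two-block system, so whatever transitive group of degree $6$ your construction realises locally, it cannot be $L$. No case-by-case choice of the element realising $L/K$ can repair this: since $L$ acts on its three blocks of size $2$ as a transitive group of degree $3$ ($C_3$ or $\Sym_3$), the natural skeleton for a locally-$L$ pair must be \emph{cubic}, not a cycle. (A minor symptom of the same misreading: the pointwise stabiliser in $A_4(6)$ of a block of size $2$ has order $2$, not $4$; the Klein group $\ZZ_2^2$ is the kernel of the action on the three blocks.)

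The paper's proof uses a cubic skeleton and a completely different source for the base $4$ and the $\sqrt{n}$ exponent. By Theorem~\ref{lem:null} there are connected cubic graphs $\Gamma_m$ of order $6\cdot 2^{2m}$ with $\FF_2$-nullity at least $4\cdot 2^m$, admitting an arc-regular $H_m$ inside a $2$-arc-regular $G_m$. One forms $\Lambda_m=\Gamma_m[2K_1]$, so the size-$2$ neighbourhood blocks are the fibres over the three cubic-graph neighbours; one embeds the nullspace $M$ and $N=\la\one\ra\oplus M$ as automorphisms and shows that $B=M\rtimes H_m$ and $A=N\rtimes G_m$ have local groups $A_4(6)$ and $2S_4(6)$, respectively (Theorem~\ref{thm:L}); then, since $B\norml A$, Lemma~\ref{lem:s} interpolates to any $L$ with $A_4(6)\le L\le 2S_4(6)$, producing an arc-stabiliser of order $|M||L|/48\ge 4^{\sqrt{n/3}-1}$ where $n=12\cdot 2^{2m}$. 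Thus the subexponential rate arises because the nullity grows like $\sqrt{n}$ on a family whose order grows exponentially in $m$ --- not from a skeleton-length parameter with commuting Klein groups, which is the mechanism of Theorem~\ref{theo:cons} and is tied to the two-block local structure these five groups lack. Even setting the obstruction aside, your proposal defers every substantive verification (the counting argument, the normalisation computations, the identification of the local group) and so is an outline rather than a proof.
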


It will be convenient for us to describe the five groups $L$ satisfying $A_4(6)\le L \le 2S_4(6)$ as subgroups of the symmetric group on the set $\Omega=\{0,1,\ldots, 5\}$ in terms of generators. In what follows, we shall follow the notation introduced in \cite{conway} as closely as possible. Let $a,b,e,f$ be the following permutations of $\Omega$:
\begin{equation}
\label{eqn:abef}
 a = (0~2~4)(1~3~5),\quad
b = (1~5)(2~4),\quad
e = (1~4)(2~5),\quad
f = (0~3).
\end{equation}
Then by \cite[Appendix A]{conway}, the groups $A_4(6)$ and $2S_4(6)$ can be expressed as:
\begin{equation}
\label{eqn:LL}
A_4(6) = \la a, e \ra, \quad
\hbox{ and } \quad
2S_4(6) = \la f,a,b \ra.
\end{equation}

The rest of the section is devoted to the proof of Theorem~\ref{thm:deg6}. In this endeavour, 
the following lemma will prove very useful. (For a graph $\Gamma$, a vertex $v\in \V(\Gamma)$ and a group $G\le \Aut(\Gamma)$,
let $G_v^{[1]}$ denote the kernel of the action of $G_v$ on $\Gamma(v)$.)

\begin{lemma}
\label{lem:s}
Let $\Lambda$ be a graph, let $v$ be a vertex of $\Lambda$ and let $A$ and $B$ be vertex-transitive groups of automorphisms of $\Lambda$ such that $B\trianglelefteq A$. Then, for every permutation group $L$ such that $B_v^{\Lambda(v)}\leq L\leq A_v^{\Lambda(v)}$, there exists $C$ such that $B\leq C\leq A$,
$C_v^{\Lambda(v)}=L$ and $C_v^{[1]} = A_v^{[1]}$.
\end{lemma}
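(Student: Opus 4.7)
My plan is to produce $C$ by pulling back $L$ along the local-action map and then multiplying by $B$. First I would consider the natural restriction homomorphism $\pi\colon A_v \to A_v^{\Lambda(v)}$, whose kernel is by definition $A_v^{[1]}$. The key preliminary observation is that $B_v$ is normal in $A_v$. This does not follow from $B\trianglelefteq A$ alone for arbitrary subgroup–overgroup pairs, but it does hold here: for $a\in A_v$ and $g\in B_v$, the conjugate $a^{-1}ga$ lies in $B$ (as $B\trianglelefteq A$) and still fixes $v$ because $a$ and $a^{-1}$ both do. Consequently $B_v^{\Lambda(v)}$ is normal in $A_v^{\Lambda(v)}$, which is consistent with the hypothesis that $L$ sits between them.

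Next I would set $H = \pi^{-1}(L)\le A_v$. By construction $A_v^{[1]}\le H$, $H^{\Lambda(v)} = L$ and $H^{[1]} = A_v^{[1]}$; moreover $B_v\le H$ because the image of $B_v$ under $\pi$ is $B_v^{\Lambda(v)}\le L$. I would then define $C = BH$, which is a subgroup of $A$ because $B\trianglelefteq A$, and evidently satisfies $B\le C\le A$.

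The only remaining task is to verify that $C_v = H$, for then $C_v^{\Lambda(v)} = H^{\Lambda(v)} = L$ and $C_v^{[1]} = H^{[1]} = A_v^{[1]}$ follow immediately. Any element of $C$ can be written as $c = bh$ with $b\in B$ and $h\in H\le A_v$, and $v^c = (v^b)^h$ equals $v$ iff $v^b = v$ (since $h$ already fixes $v$), i.e.\ iff $b\in B_v$. Combined with $B_v\le H$, this yields $C_v = B_v H = H$, as required.

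The argument is essentially bookkeeping once one identifies $C = B\cdot\pi^{-1}(L)$ as the right candidate. The only mildly delicate step I anticipate is the normality $B_v\trianglelefteq A_v$, which is easy but requires noting that the conjugating elements are drawn from $A_v$ rather than all of $A$. The vertex-transitivity hypothesis on $A$ and $B$ plays no role in the construction itself; it is presumably included because the intended applications use $C$ as a vertex-transitive extension of $B$.
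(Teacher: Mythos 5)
Your proof is correct and follows essentially the same route as the paper: both take $C = B\,\pi^{-1}(L)$ and reduce everything to showing $C_v = \pi^{-1}(L)$. The only (immaterial) difference is that the paper derives $C_v = \pi^{-1}(L)$ by citing the modular law, $B\pi^{-1}(L)\cap A_v = (B\cap A_v)\pi^{-1}(L)$, whereas you verify the same identity by a direct element computation; your added remark that $B_v\trianglelefteq A_v$ is true but not actually needed anywhere in the argument.
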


\begin{proof}
Let $\pi\colon A_v \to A_v^{\Lambda(v)}$ be the epimorphism that maps each $g\in A_v$ to the permutation induced by $g$ on $\Lambda(v)$. Observe that $B_v\leq \pi^{-1}(L)\leq A_v$ and $(\pi^{-1}(L))^{\Lambda(v)}=\pi(\pi^{-1}(L))=L$. Finally, let $C=B\pi^{-1}(L)$. Clearly, $B\leq C\leq A$ and  $C_v=B\pi^{-1}(L)\cap A_v$. By the modular law, the latter equals  $(B\cap A_v)\pi^{-1}(L)$ and hence $C_v=\pi^{-1}(L)$. In particular, the group $A_v^{[1]}$ (being the kernel of $\pi$) is contained in 
$C_v$ and thus is equal to $C_v^{[1]}$. Moreover, $C_v^{\Lambda(v)}= \pi(C_v) = \pi(\pi^{-1}(L)) = L$.
\end{proof}

The proof of Theorem~\ref{thm:deg6} makes use of certain cubic arc-transitive graphs having a large nullity over the field $\FF_2$
of order $2$, the existence of which was proved in \cite{null}. Let us explain this in more detail.

Let $\Gamma$ be a graph with vertex-set $V$, let $\FF$ be a field and let $\FF^V$ be the set of all functions from $V$ to $\FF$,
viewed as an $\FF$-vector space (with addition and multiplication by scalars defined pointwise). The {\em $\FF$-nullspace} of
$\Gamma$ is then the set of all elements $x\in \FF^V$ such that for every $v\in V$ we have
$$%
 \sum_{u\in \Gamma(v)}x(u) = 0.
$$

The $\FF$-nullspace is clearly a subspace of the vector space $\FF^V$ and its dimension over $\FF$ is called the {\em $\FF$-nullity} of $\Gamma$. We can now state and prove the crucial step in the proof of Theorem~\ref{thm:deg6}.

\begin{theorem}
\label{thm:L}
Let $L$ be one of the five imprimitive permutation groups of degree $6$ admitting no blocks of imprimitivity of size $3$.
Let $\Gamma$ be a connected cubic graph, let $M$ be the  $\FF_2$-nullspace of $\Gamma$ and suppose that $M$ is non-trivial.
Suppose further that $\Aut(\Gamma)$ contains two subgroups $H$ and $G$ with $H$ acting regularly on the set of arcs of $\Gamma$, with $G$ acting regularly on the set of $2$-arcs of $\Gamma$ and with $H\leq G$. Let $\Lambda=\Gamma[2K_1]$ be the lexicographic product of $\Gamma$ with the edgeless graph $2K_1$ on two vertices. Then  there exists an arc-transitive subgroup $C\le \Aut(\Lambda)$, such that the pair $(\Lambda,C)$ is locally-$L$ and the stabiliser of an arc of $\Lambda$ in $C$ has order $|M| |L|/48\geq |M|/4$.
\end{theorem}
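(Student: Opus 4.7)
The plan is to apply Lemma~\ref{lem:s} inside the subgroup $\FF_2^{\V(\Gamma)}\rtimes\Aut(\Gamma)\le\Aut(\Lambda)$, with the convention that a vector $x\in\FF_2^{\V(\Gamma)}$ acts on $\Lambda=\Gamma[2K_1]$ by swapping the two vertices in the fibre over $v_0$ exactly when $x(v_0)=1$. Let $\mathbf 1$ denote the all-ones vector in $\FF_2^{\V(\Gamma)}$; since $\Gamma$ is cubic, the adjacency operator sends $\mathbf 1$ to itself, so $\mathbf 1\notin M$ and $|M+\la\mathbf 1\ra|=2|M|$. I would define
\[
B=M\rtimes H,\qquad A=(M+\la\mathbf 1\ra)\rtimes G,
\]
and let $C$ be the subgroup produced by Lemma~\ref{lem:s} for the given $L$.

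The hypotheses of Lemma~\ref{lem:s} should be straightforward to check: $B\trianglelefteq A$ reduces, via $H\trianglelefteq G$ and the identity $h\mathbf 1=\mathbf 1$, to the observation that $(1+h)(x+\mathbf 1)=(1+h)x\in M$ for every $x\in M$, $h\in H$; and both $A$ and $B$ are vertex-transitive on $\Lambda$ since their images in $\Aut(\Gamma)$ are vertex-transitive and since $M$ (being non-trivial and $\Aut(\Gamma)$-invariant) contains an element non-zero at any prescribed vertex. The crux is the identification of the local groups inside $\FF_2\wr\Sym_3=\FF_2^3\rtimes\Sym_3$. Writing $v=(v_0,0)$, the restriction map $\pi\colon M_v\to\FF_2^{\Gamma(v_0)}$ has image in the even-weight subspace (by the nullspace equation at $v_0$), and this image is preserved by the $\Sym_3$-action induced by $G_{v_0}$. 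Since $\Sym_3$ is transitive on the three weight-$2$ vectors of $\FF_2^3$, it is either trivial or the whole even-weight subspace $\FF_2^2$.

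The main obstacle is excluding the trivial case for $\pi$, which I would handle by a short connectedness argument: if $\pi$ were zero at some (and hence, by vertex-transitivity, every) vertex, then the support of every $x\in M$ would be closed under the edge relation of $\Gamma$; connectedness would force this support to be empty or all of $\V(\Gamma)$, giving $M\subseteq\la\mathbf 1\ra$, which contradicts $M\neq 0$ and $\mathbf 1\notin M$. Hence $\pi$ surjects onto $\FF_2^2$, and an analogous argument (using that $\mathbf 1|_{\Gamma(v_0)}$ has weight $3$, so the $\mathbf 1$-coset contributes odd-weight vectors) shows that $(M+\la\mathbf 1\ra)_v$ surjects onto all of $\FF_2^3$. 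Combined with $H_{v_0}^{\Gamma(v_0)}\cong C_3$ and $G_{v_0}^{\Gamma(v_0)}\cong\Sym_3$, this yields $B_v^{\Lambda(v)}\cong A_4(6)$ and $A_v^{\Lambda(v)}\cong 2S_4(6)$ under a suitable identification of $\Lambda(v)$ with $\Omega$.

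Since the five admissible groups $L$ are precisely those with $A_4(6)\le L\le 2S_4(6)$, Lemma~\ref{lem:s} supplies $C$ with $B\le C\le A$, $C_v^{\Lambda(v)}=L$, and $C_v^{[1]}=A_v^{[1]}$; vertex-transitivity of $B$ and transitivity of $L$ on $\Lambda(v)$ render $C$ arc-transitive. For the stabiliser count, faithfulness of $G_{v_0}$ on $\Gamma(v_0)$ forces $A_v^{[1]}$ to sit inside the base group, and the $\mathbf 1$-coset cannot contribute (an odd-weight restriction to $\Gamma(v_0)$ is inconsistent with the even-weight constraint), so $A_v^{[1]}=M_v^{[1]}$, which has order $|M|/8$ by surjectivity of $\pi$. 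Consequently $|C_{uv}|=|C_v|/6=|L|\cdot|M|/48$, with the stated bound $|M|/4$ following from $|L|\ge 12$.
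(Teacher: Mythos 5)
Your proposal is correct and follows essentially the same route as the paper: the same groups $B=M\rtimes H$ and $A=(M\oplus\la\one\ra)\rtimes G$ acting on $\Lambda$, the same identification of the local groups with the even-weight subgroup $\la e,e^a\ra$ and with $\la f,f^a,f^{a^2}\ra$ (forced by $\Sym_3$-invariance plus a connectedness argument equivalent to the paper's construction of $x\in M$ with $x(v)=0$, $x(s)=1$), and the same appeal to Lemma~\ref{lem:s} to produce $C$ with $B\le C\le A$ and $C_{\tv}^{[1]}=A_{\tv}^{[1]}$. Your final count via $|C_v|=|L|\cdot|A_v^{[1]}|$ with $|A_v^{[1]}|=|M|/8$ is just an equivalent rearrangement of the paper's index computation $|A:C|=48/|L|$, so there is no substantive difference.
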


\begin{proof}
Let $V$ denote the vertex-set of $\Gamma$. We will think of the vertex-set of $\Lambda$ as $\FF_2\times V$. Observe that there is a natural embedding of $G$ as well as of the additive group of $\FF_2^V$ into $\Aut(\Lambda)$ given by
$$%
 (a,v)^g = (a,v^g)\>\> \hbox{ and }\>\> (a,v)^x = (a+x(v),v)
$$%
for every $(a,v) \in \FF_2\times V$, $g\in G$ and $x\in \FF_2^V$. In this sense, we may view $G$ and $\FF_2^V$ as subgroups of $\Aut(\Lambda)$. 

Let $\one\in \FF_2^V$ be the constant function mapping each vertex to $1$ and let $N=\la \one \ra \oplus M\leq \FF_2^V\leq \Aut(\Lambda)$. Finally, let $A$
and $B$ be the subgroups of $\Aut(\Lambda)$ defined by
$$A= \la N,G\ra  \qquad \hbox{ and } \qquad B = \la M,H\ra.$$

Since $M$ is the $\FF_2$-nullspace of $\Gamma$ and $G\leq\Aut(\Gamma)$, it follows that $M$ is normalised by $G$ and hence by $H$. Moreover,  the automorphism $\one$ of $\Lambda$ is clearly centralised by both $M$ and $G$ and hence $N$ is also normalised by $G$. In fact, since $M$ and $N$ act trivially on the partition $\{\FF_2 \times \{v\} : v \in V\}$ of $\V(\Lambda)$, while $G$ and $H$ act faithfully, we see that $N\cap G = M\cap H = 1$, and thus

\begin{equation}
\label{eqn:AB}
A = N\rtimes G \hbox{ and } B = M \rtimes H.
\end{equation}

Let us now show that $B\norml A$. It is clear from the definition that $B\leq A$. We have already noted that $G$ normalises $M$. Moreover, $H$ has index $2$ in $G$ and hence is normal in $G$. It follows that $G$ normalises $B = M \rtimes H$.  Note that $A$ is generated by $B$, $G$ and $\one$. Since $\one$ centralises $B$, this implies that $B\norml A$.

Let $v$ be a vertex of $\Gamma$ and let $\tv=(0,v)$ be the corresponding vertex of $\Lambda$.
Let us now show that
\begin{equation}
\label{eqn:local}
      A^{\Lambda(\tv)}_{\tv}=N^{\Lambda(\tv)}_{\tv}G^{\Lambda(\tv)}_{\tv} \quad \hbox{ and } \quad
      B^{\Lambda(\tv)}_{\tv}=M^{\Lambda(\tv)}_{\tv}H^{\Lambda(\tv)}_{\tv}.
\end{equation}

Let $a\in A$. By (\ref{eqn:AB}), $a=xg$ for some $x\in N$ and $g\in G$. Then $a\in A_{\tv}$ if and only if $(0,v)=(0,v)^a=(x(v),v^g)$ if and only if $g\in G_v$ and $x\in N_{\tv}$. Note that an element $g\in G$ fixes $v\in\V(\Gamma)$ if and only if (when viewed as an automorphism of $\Lambda$) it fixes $\tv$. In this sense we can write $G_v=G_{\tv}$. We have thus shown that  $A_{\tv}=N_{\tv}G_{\tv}$. 

On the other hand, $B_{\tv}=A_{\tv}\cap B=N_{\tv}G_{\tv}\cap MH=M_{\tv}H_{\tv}$. To conclude the proof of (\ref{eqn:local}),
 apply the epimorphism that maps each $a\in A_{\tv}$ to the permutation induced by $a$ on $\Lambda(\tv)$ 
 to the equalities $A_{\tv}=N_{\tv}G_{\tv}$ and $B_{\tv}=M_{\tv}H_{\tv}$.

We shall now prove that the permutation groups 
$B^{\Lambda(\tv)}_{\tv}$ and $A^{\Lambda(\tv)}_{\tv}$ are permutation isomorphic to the groups $A_4(6)$ and $2S_4(6)$, respectively.
Let $\Gamma(v)=\{s,t,u\}$ and label the six neighbours $(0,s),(1,s),(0,t),(1,t),(0,u),(1,u)$ of $\tv$ by $0,3,2,5,4,1$, respectively.
Since $H$ acts regularly on the set of the arcs of $\Gamma$, it follows that $H_v^{\Gamma(v)} = \la (s~t~u) \ra \cong C_3$. Similarly, since $G$ acts regularly on the set of the $2$-arcs of $\Gamma$, it follows that $G_v^{\Gamma(v)} = \la (s~t~u), (t~u)\ra \cong\Sym_3$. In particular, when $H_v$ and $G_v$ are viewed as acting on $\Lambda(\tv)$, we see that
 \begin{equation}
 \label{eqn:HG}
 H^{\Lambda(\tv)}_{\tv}=\langle a\rangle, \> \>
 G^{\Lambda(\tv)}_{\tv}=\langle a,b\rangle, \> \hbox{  where } \>
 a = (0~2~4)(1~3~5),\> \>
b = (1~5)(2~4).
 \end{equation}
Note that the permutations $a$ and $b$ above are the same as the permutations defined in (\ref{eqn:abef}). Let us define the following permutations of $\Lambda(\tv)$:
$$e= (1~4)(2~5),\> f=(0~3) \>\hbox{ and } \> k=(0~3)(1~4).$$
Note that $e$ and $f$ are as in (\ref{eqn:abef}) and that $k=e^a$. Moreover, note that $\la e,k\ra \cong \ZZ_2^2$ and that it is in fact the group consisting of all even permutations of $\la f,f^a, f^{a^2}\ra$. We shall now show that  
\begin{equation}
\label{eqn:MN}
M_\tv^{\Lambda(\tv)} = \la e,k \ra = \la e,e^a\ra,\> \hbox{ and }\> N_\tv^{\Lambda(\tv)}= \la f,f^a,f^{a^2}\ra. 
\end{equation}

 Since $M\neq 0$ and $\Gamma$ is connected and arc-transitive, there exists $x\in M$ such that $x(v)=0$ and $x(s)=1$ (in particular, $x\in M_\tv$). Since $x$ belongs to the $\FF_2$-nullspace of $\Gamma$, it follows that $x(t)+x(u)=1$. Let $\gamma$ be the permutation of $\Lambda(\tv)$ induced by the action of $x$ on $\Lambda(\tv)$. Since $M_{\tv}$ is normalised by $G_{\tv}$, it follows that $M_{\tv}^{\Lambda(\tv)}$ is normalised by $G_{\tv}^{\Lambda(\tv)}$ and hence $\gamma,\gamma^a,\gamma^{a^2}\in M_{\tv}$. Observe that $\{\gamma,\gamma^a,\gamma^{a^2}\}=\{e,k,ek\}$, and thus $\la e, k \ra \le M_\tv^{\Lambda(\tv)}$. On the other hand, since $M$ is the $\FF_2$-nullity of $\Gamma$, it follows that $|M_\tv^{\Lambda(\tv)}| \le 4$, and thus $M_{\tv}^{\Lambda(\tv)} = \langle e,k\rangle$, as claimed.

Since $\Gamma$ is $G$-arc-transitive, there exists $g\in G$ such that $(s,v)^g=(v,s)$. Let $y=\one+x+x^g$. Then $y(v)=1+x(v)+x(s)=1+0+1=0$ and hence $y\in N_{\tv}$. Moreover, since $x,x^g\in M$ it follows that $x(s)+x(t)+x(u)=0=x^g(s)+x^g(t)+x^g(u)$ and therefore $y(s)+y(t)+y(u)=1$. It follows that $y^{\Lambda(\tv)}\in N_{\tv}^{\Lambda(\tv)} \setminus M_{\tv}^{\Lambda(\tv)}$ and hence $N_{\tv}^{\Lambda(\tv)}$ has order at least $8$. On the other hand, $N_{\tv}^{\Lambda(\tv)}$ acts trivially on the partition $\{ \{0,3\}, \{1,4\}, \{2,5\}\}$ of $\Lambda(\tv)$, and is therefore a subgroup of  $\la f,f^a,f^{a^2}\ra \cong \ZZ_2^3$. This proves that $N_{\tv}^{\Lambda(\tv)} = \la f,f^a,f^{a^2}\ra$, as claimed.

Finally, if we combine (\ref{eqn:local}), (\ref{eqn:HG}) and (\ref{eqn:MN})  we obtain
$A^{\Lambda(\tv)}_{\tv}=$ $N^{\Lambda(\tv)}_{\tv}G^{\Lambda(\tv)}_{\tv} =$ $\la f,f^a,f^{a^2},a,b\ra = \la f,a,b \ra$ and
$B^{\Lambda(\tv)}_{\tv}=$ $M^{\Lambda(\tv)}_{\tv}H^{\Lambda(\tv)}_{\tv} = $ $\la e, e^a, a\ra = \la a,e\ra$.
Hence, by (\ref{eqn:LL}), we see that

\begin{equation}
\label{eqn:ABLL}
      A^{\Lambda(\tv)}_{\tv}= 2S_4(6) \quad \hbox{ and } \quad
      B^{\Lambda(\tv)}_{\tv} = A_4(6).
\end{equation}      

In particular, this shows that $B^{\Lambda(\tv)}_{\tv} \le L \le A^{\Lambda(\tv)}_{\tv}$. Since $B$ is normal in $A$, Lemma~\ref{lem:s} implies that there exists
$C\le \Aut(\Lambda)$ with $B\le C\le A$, $A_\tv^{[1]} = C_{\tv}^{[1]}$ and such that the pair $(\Lambda,C)$ is locally-$L$. 

Since $A$ and $C$ are both transitive on $\V(\Lambda)$ and since $A_\tv^{[1]} = C_{\tv}^{[1]}$, it follows that 
$|A:C| =$  $|A_v:C_v| =$ $|A^{\Lambda(\tv)}_{\tv} : C^{\Lambda(\tv)}_{\tv}|=$ $|2S_4(6):L|=48/|L|$. By (\ref{eqn:AB}) we see that $|A| = |N| |G| = 2|M||G|$. Moreover, since $G$ is transitive on $\V(\Gamma)$ and $|G_v|=6$, it follows that $|A| = 12|M||\V(\Gamma)|=6|M||\V(\Lambda)|$. Hence $|C| = |A| /  |A:C|= |M||V(\Lambda)||L|/8$. Since $C$ is transitive on the $6|\V(\Lambda)|$ arcs of $\Lambda$, this implies
that the stabiliser of an arc of $\Lambda$ in $C$ has order $|M||L|/48$. Finally, since $|L|\geq 12$, we have $|M||L|/48\geq |M|/4$.
\end{proof}

In order to apply Theorem~\ref{thm:L} to prove Theorem~\ref{thm:deg6}, we need to establish the existence of
appropriate cubic arc-transitive graphs with large $\FF_2$-nullity. The existence of such graphs follows immediately from \cite[Construction 8, Proposition 9]{null}, where the following was proved:

\begin{theorem}
\label{lem:null}
For every positive integer $m$ there exists a connected cubic graph of order $6\cdot2^{2m}$ and $\FF_2$-nullity at least $4\cdot2^{m}$
which admits an arc-regular group of automorphisms $H$ and a $2$-arc-regular group of automorphisms $G$ such that $H\le G$.
\end{theorem}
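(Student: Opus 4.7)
The plan is to construct the graphs $\Gamma_m$ recursively via a sequence of $\FF_2$-elementary abelian covers of a small base graph, tracking both the lifted automorphism groups and the $\FF_2$-nullity through each covering step.

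For the base case $m=0$, I would take $\Gamma_0=\K_{3,3}$: it is cubic of order $6$, and since its $\FF_2$-adjacency matrix decomposes into two identical all-ones blocks, its $\FF_2$-rank equals $2$ and hence its $\FF_2$-nullity equals $4$. Moreover, $\K_{3,3}$ is $3$-arc-regular, so $\Aut(\K_{3,3})\cong\Sym_3\wr\Sym_2$ contains an arc-regular subgroup $H_0$ of order $18$ and a $2$-arc-regular subgroup $G_0$ of order $36$ with $H_0\le G_0$, giving the case $m=0$.

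For the inductive step, assuming $\Gamma_m$ together with $H_m\le G_m$ has been constructed, I would form $\Gamma_{m+1}$ as a connected $\FF_2$-voltage cover of $\Gamma_m$, applied twice so as to quadruple the vertex count. The voltage assignment must be chosen $G_m$-invariant so that, by the lifting criterion \cite[Proposition 6.4]{MMP}, both $H_m$ and $G_m$ lift to subgroups $H_{m+1}\le G_{m+1}$ of $\Aut(\Gamma_{m+1})$; the vertex-stabilisers and their actions on the respective neighbourhoods transfer verbatim to the cover, which preserves both arc-regularity and $2$-arc-regularity.

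The key arithmetic input is that a single $\FF_2$-voltage cover at least doubles the $\FF_2$-nullity whenever the voltage interacts correctly with the nullspace of the adjacency matrix $A_m$ of $\Gamma_m$: a short block-matrix manipulation shows that, under the compatibility condition that the ``off-diagonal'' block $A^1$ associated with the voltage maps $\ker(A_m)$ into $\mathrm{Im}(A_m)=\ker(A_m)^\perp$, the $\FF_2$-nullity of the cover is exactly twice that of $\Gamma_m$. Iterating two such covers per induction step, starting from the base nullity $4$, produces a cubic graph of order $6\cdot 2^{2m}$ with $\FF_2$-nullity at least $4\cdot 2^{2m}$, comfortably exceeding the required $4\cdot 2^m$. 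The principal obstacle is producing voltage assignments that are simultaneously $G_m$-invariant (so that the groups lift), non-trivial on enough fundamental cycles (so that the cover is connected), and satisfy the compatibility $A^1\cdot\ker(A_m)\subseteq\mathrm{Im}(A_m)$: this last is a symmetric bilinear constraint on the restriction to $\ker(A_m)$, and although a parameter count suggests that such voltages exist in abundance, exhibiting explicit $G_m$-invariant ones is the technical core of \cite[Construction 8]{null}.
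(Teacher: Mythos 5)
Your proposal has a genuine gap at precisely the point where the whole difficulty of the theorem lives. Note first that the paper itself contains no proof of this statement: it is quoted verbatim from \cite{null} (Construction 8 and Proposition 9), so the paper's ``proof'' is that citation. Your blind reconstruction ends up in the same place: the existence of voltage assignments that are simultaneously $G_m$-invariant (so the groups lift), non-trivial enough for connectivity, and satisfy the compatibility condition $A_1(\ker A_m)\subseteq\mathrm{Im}(A_m)$ is exactly the content of the theorem, and you defer it to \cite{null} rather than prove it. A parameter count ``suggesting abundance'' is not an argument here, because the constraints pull against each other: $G_m$-invariance confines the voltage to a fixed low-dimensional subspace of $\FF_2$-valued functions on the edges, while the compatibility condition amounts to the vanishing of a symmetric bilinear form on $\ker A_m$, i.e.\ roughly $d(d+1)/2$ equations with $d=\dim\ker A_m$ growing along your induction; nothing you say rules out that the invariant voltage space becomes too small to solve this system. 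There is strong circumstantial evidence that it does: your mechanism, if it worked at every step, would yield $\FF_2$-nullity $4\cdot 2^{2m}$, \emph{linear} in the order $6\cdot 2^{2m}$, whereas \cite{null} proves only $4\cdot 2^m$, of square-root order. If exact doubling per cover were sustainable, the source would surely state the much stronger linear bound; the discrepancy indicates the compatibility condition can at best be arranged partially, which is presumably why the genuine construction achieves only the weaker growth. So your inductive step is not a routine verification with a technical lemma outsourced --- it is the theorem itself, asserted.

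Two lesser points. Your base-case inference ``$\K_{3,3}$ is $3$-arc-regular, so $\Aut(\K_{3,3})$ contains an arc-regular $H_0$ and a $2$-arc-regular $G_0$ with $H_0\le G_0$'' does not follow formally: an $s$-arc-regular group of a cubic graph need not contain $t$-arc-regular subgroups for $t<s$ (the Djokovi\'c--Miller amalgam types show this fails in general). For $\K_{3,3}$ the subgroups do exist but should be exhibited, e.g.\ $H_0=(\ZZ_3\times\ZZ_3)\rtimes\ZZ_2$ with the involution swapping the two sides. On the positive side, your linear algebra is correct: for a $\ZZ_2$-cover the nullity equals $\dim\ker A_m+\dim\{u\in\ker A_m : A_1u\in\mathrm{Im}(A_m)\}$, hence is at most twice the base nullity with equality under your bilinear condition, and the lifting of $H_m$ and $G_m$ with preservation of arc- and $2$-arc-regularity along invariant covers is sound. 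The proof stands or falls on the deferred step, and as written it falls.
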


We can now combine Theorem~\ref{thm:L} and Theorem~\ref{lem:null} to prove Theorem~\ref{thm:deg6}.\\

\noindent\emph{Proof of Theorem~\ref{thm:deg6}.} Let $L$ be one of the five imprimitive permutation groups of degree $6$ admitting no system of imprimitivity with blocks of size $3$. Let $m$ be a positive integer. By Theorem~\ref{lem:null}, there exists a connected cubic graph $\Gamma_m$ of order $6\cdot2^{2m}$ and $\FF_2$-nullity at least $4\cdot 2^{m}$ which admits an arc-regular group of automorphisms $H_m$ and a $2$-arc-regular group of automorphisms $G_m$ such that $H_m\le G_m$. Let $\Lambda_m=\Gamma_m[2K_1]$ and let $n=|\V(\Lambda_m)|=12\cdot2^{2m}$. By Theorem~\ref{thm:L}, there exists an arc-transitive subgroup $C_m\le \Aut(\Lambda_m)$, such that the pair $(\Lambda_m,C_m)$ is locally-$L$ and the stabiliser of an arc of $\Lambda_m$ in $C_m$ has order at least $\frac{|M|}{4}=4^{2\cdot 2^{m}-1}=4^{2\cdot \sqrt{\frac{n}{12}}-1}=4^{\sqrt{\frac{n}{3}}-1}$. Thus $L$ is $f$-graph-unrestrictive where $f(n)=4^{\sqrt{\frac{n}{3}}-1}$. \hfill \qedsymbol

\section{Transitive permutation groups of degree at most $7$}
\label{sec:smalldegree}

In this section, we determine the status of Problem~\ref{mainprob} for $L$ a transitive permutation group of degree at most $7$. The results are summarised in Table~\ref{table:degree7}, where we use the notation from~\cite{conway}.

Since $L$ has degree at most $7$, it follows that $L$ is graph-restrictive if and only if it is regular or primitive (see~\cite[Proposition~$14$]{PSVRestrictive}). In this case, $L$ has graph-type $\Cons$. We may thus assume that $L$ is neither regular nor primitive. Since permutation groups of prime degree are primitive, it thus suffices to consider the cases when $L$ has degree $4$ or $6$. 

The only transitive permutation group of degree $4$ which is neither regular nor primitive is $\D_4$. This group is permutation isomorphic to the imprimitive wreath product $\ZZ_2\wr\ZZ_2$ and, by Theorem~\ref{thm:wreath}, has graph-type $\Exp$. 

We now consider the case when $L$ has degree $6$. Suppose that $L$ admits a system of imprimitivity consisting of two blocks of size $3$ and let $A$ be one of these blocks. If $L_{(A)}\neq 1$, then by Corollary~\ref{cor:cons} $L$ has graph-type $\Exp$. If $L_{(A)}=1$, then it is easily checked that $L$ must be permutation isomorphic to $\D_6$. It follows by Corollary~\ref{cor:cons} that $L$ is $\Pol$-graph-unrestrictive. On the other hand, it follows from~\cite[Theorem A]{Verret2} that $\D_6$ is $\Pol$-graph-restrictive and hence $\D_6$ has graph-type $\Pol$.

\begin{scriptsize}
\begin{table}[hhh!]
\begin{center}
\begin{tabular}{|p{90mm}|c|c|}\hline
 & No. of grps & Graph-type  \\\hline
Regular or primitive &26  & $\Cons$ \\\hline
$\D_6$ & 1 & $\Pol$  \\\hline
$\D_4$, $\ZZ_3\wr\ZZ_2$, $\ZZ_2\wr\ZZ_3$, $\Sym_3\wr\ZZ_2$, $\ZZ_2\wr\Sym_3$, $F_{18}(6)\colon 2$, $F_{36}(6)$ & 7 & $\Exp$  \\\hline
$A_4(6)$, $S_4(6c)$, $S_4(6d)$ & 3 & $\Subexp$ or $\Exp$  \\\hline
\end{tabular}
\medskip
\caption{\scriptsize{Graph-types of transitive permutation groups of degree at most $7$}}\label{table:degree7}
\end{center}
\end{table}
\end{scriptsize}

It thus remains to deal with transitive permutation groups of degree $6$ which are imprimitive but do not admit a system of imprimitivity consisting of two blocks of size $3$. As we saw in Section~\ref{deg6}, there are five such groups, two of them are wreath products and hence have graph-type $\Exp$. As for the remaining three, it follows from Theorem~\ref{thm:deg6} that they are $\Subexp$-graph-unrestrictive. Unfortunately, for none of these three groups we were able to decide whether it has graph-type $\Subexp$ or $\Exp$.

\thebibliography{99}

\bibitem{ConDob} M.~Conder, P.~Dobcs\'{a}nyi, Trivalent symmetric graphs on up to 768 vertices, \textit{J. Combin. Math. Combin. Comput.} \textbf{40} (2002), 41--63. 

\bibitem{conway} J.~H.~Conway, A.~Hulpke, J.\ McKay, On transitive permutation groups, \textit{London Math.\  Soc.\ J.\ Comp.\ Math.} {\bf 1} (1998), 1--8.

\bibitem{MMP} A.~Malni\v{c}, D.~Maru\v{s}i\v{c}, P.~Poto\v{c}nik, Elementary Abelian Covers of Graphs, \textit{J.\ Alg.\ Combin.} {\bf 20} (2004), 71--97.

\bibitem{null} P.~Poto\v{c}nik, P.~Spiga, G.~Verret,  On the nullspace of arc-transitive graphs over finite fields,
  \textit{J.\ Alg.\ Combin.} (2012), \url{http://dx.doi.org/10.1007/s10801-011-0340-2}.

\bibitem{PSVRestrictive} P.~Poto\v{c}nik, P.~Spiga, G.~Verret, On graph-restrictive permutation groups, \textit{J. Comb. Theory, Ser. B} \textbf{102} (2012), 820--831.

\bibitem{PConj} C.~E.~Praeger, Finite quasiprimitive group actions on graphs and designs, in Groups - Korea '98, Eds: Young Gheel Baik, David
  L. Johnson, and Ann Chi Kim, de Gruyter, Berlin and New York, (2000), 319--331. 

\bibitem{PraegerXu} C.~E.~Praeger, M.~Y.~Xu, A Characterization of a Class of Symmetric Graphs of Twice Prime Valency, \textit{European J. Combin.} \textbf{10} (1989), 91--102.

\bibitem{Tutte} W.~T.~Tutte, A family of cubical graphs, \textit{Proc. Camb. Phil. Soc.} \textbf{43} (1947), 459--474.

\bibitem{Tutte2} W.~T.~Tutte, On the symmetry of cubic graphs, \textit{Canad. J. Math.} \textbf{11} (1959), 621--624.

\bibitem{Verret} G.~Verret, On the order of arc-stabilizers in arc-transitive graphs, \textit{Bull. Australian Math. Soc.} \textbf{80}  (2009), 498--505.

\bibitem{Verret2} G.~Verret, On the order of arc-stabilisers in arc-transitive graphs, II, \textit{Bull. Australian Math. Soc.}, accepted.

\bibitem{Weiss} R.~Weiss, $s$-transitive graphs, \textit{Colloq. Math. Soc. J\'{a}nos Bolyai} \textbf{25} (1978), 827--847.

\end{document}